\newcommand\RE{\mathbb{R}}
\newcommand\CO{\mathbb{C}}
\newcommand\dual[3]{\langle#1,#2\rangle_{#3}}
\newcommand\sfzero{\mathsf{0}}
\newcommand\sfA{\mathsf{A}}
\newcommand\sfB{\mathsf{B}}
\newcommand\sfM{\mathsf{M}}
\newcommand\sfx{\mathsf{x}}
\newcommand\sfy{\mathsf{y}}
\newcommand\Tf{T_{F}}
\newcommand\Tfh{T_{F,h}}
\newcommand\bfsigma{\boldsymbol{\sigma}}
\newcommand\bftau{\boldsymbol{\tau}}
\newcommand\bfH{\mathbf{H}}
\newcommand\bfn{\mathbf{n}}
\newcommand\grad{\operatorname\nabla}
\renewcommand\div{\operatorname{\mathrm{div}}}
\newcommand\Hdiv{\mathbf{H}(\div;\Omega)}
\newcommand{\vertiii}[1]{{\left\vert\kern-0.25ex\left\vert\kern-0.25ex\left\vert #1 
    \right\vert\kern-0.25ex\right\vert\kern-0.25ex\right\vert}}
\newcommand\gap{\hat\delta}
\newcommand\ggap{\delta}
\renewcommand\H{\mathcal{H}}
\newcommand\bfu{\mathbf{u}}
\newtheorem{theorem}{Theorem}
\newtheorem{lemma}[theorem]{Lemma}
\newtheorem{proposition}[theorem]{Proposition}
\newtheorem{corollary}[theorem]{Corollary}
\theoremstyle{remark}
\newtheorem{remark}{Remark}
\begin{document}

\title[DPG approximation of eigenvalue problems]{Discontinuous Petrov--Galerkin approximation of eigenvalue problems}

\author{Fleurianne Bertrand}
\address{Humboldt-Universit\"at zu Berlin, Germany}
\curraddr{}
\email{}
\thanks{}
\author{Daniele Boffi}
\address{King Abdullah University of Science and Technology, Saudi Arabia, and
University of Pavia, Italy}
\curraddr{}
\email{}
\thanks{}

\author{Henrik Schneider}
\address{Humboldt-Universit\"at zu Berlin, Germany}
\curraddr{}
\email{}
\thanks{}

\date{}

\begin{abstract}
In this paper, the discontinuous Petrov--Galerkin approximation of
the Laplace eigenvalue problem is discussed. We consider in particular 
the primal and ultra weak formulations of the problem and prove the convergence
together with a priori error estimates.
Moreover, we propose two possible error estimators and perform the
corresponding a posteriori error analysis. The theoretical results are confirmed 
numerically and it is shown that the error estimators can be used to design an 
optimally convergent adaptive scheme.
\end{abstract}

\maketitle

\section{Introduction}

DPG approximation of partial differential equations is a popular and effective
technique which has reached a quite mature level of discussion within the
scientific community.
The literature about DPG is pretty rich.
The method has been introduced in a series of
papers~\cite{DPG1,DPG2,DPG3,DPG4} during the last decade. Originally, the
method has been presented as a technique to design an intrinsically stabilized
scheme for advective problems. The main idea is to use a suitable
discontinuous trial and test functions that are tailored for stability. The
\emph{ideal} DPG formulation is turned into a \emph{practical} DPG
formulation~\cite{practical} where the test function space is easily computable
and arbitrary close to the optimal one. The DPG formulation comes with a
natural a posteriori error indicator that can be used for driving a robust
$hp$ adaptivity. Moreover, it has been shown that for one dimensional problems
the DPG method can be tuned to provide no phase errors in the case of
time-harmonic wave propagation.

After these pioneer works, several studies have been performed, showing that
the method can be applied to a variety of other problems. In particular: a
solid analysis has been presented in the case of the Laplace
equations~\cite{DPGPoisson}; the method has been proved locking free for
linear elasticity~\cite{elasticity}; it has been applied to Friedrichs-like
systems~\cite{Friedrichs}, including convection-diffusion-reaction, linear
continuum mechanics, time-domain acoustic, and a version of Maxwell's
equations; it has been applied to the Reissner--Mindlin plate bending
model~\cite{RM}, to the Helmholtz equation~\cite{Helmholtz}, to the Stokes
problem~\cite{Stokes}, to compressible flows~\cite{compressible}, to the
Navier--Stokes equations~\cite{NS}, and to the Maxwell
equation~\cite{Maxwell}.

In this paper, we are interested in the approximation of the Laplace eigenvalue
problem. We study the so-called \emph{primal} and \emph{ultra weak}
formulation of the Poisson problem: we refer the interested reader
to~\cite{DPGPrimal,DPGPoisson,practical} for the analysis and a discussion
of the primal and ultra weak formulations for the Laplace problem in the case
of the source problem. We will also look at a posteriori error estimators,
perform an a posteriori analysis, and show numerically the optimal convergence
of an adaptive scheme.  

Following what was recently done for the Least-Squares finite element
method~\cite{ls,lselas}, we study how the DPG method can approximate
eigenvalue problems.

After recalling the abstract setting for DPG approximations and the
convergence of eigenvalues and eigenfunctions in Section~\ref{se:pbsetting},
we apply the theoretical framework to the Laplace eigenvalue problem in
Section~\ref{se:laplace}, where we show the a priori estimates for the primal
and ultra weak formulations.
The a posteriori analysis is developed in Section~\ref{se:post} where we
consider a \emph{natural} error estimator related to what was studied
in~\cite{cdg} and the alternative error estimator introduced
in~\cite{Hellwig2019Adaptive} which is based on a suitable characterization of
the eigenfunctions in terms of Crouzeix--Raviart elements valid for the lowest
order approximation.
Finally, in Section~\ref{se:num} we present some numerical tests where the
theory is confirmed and where it is shown that an adaptive scheme for the DPG
approximation of the eigensolutions of the Laplace problem is optimally
convergent.

\section{Problem formulation and a priori analysis framework}
\label{se:pbsetting}

We start by describing the general structure of a DPG \emph{source} problem,
since this is useful in order to introduce our notation before dealing with
the corresponding eigenvalue problem.

The source problem we are studying is: find $u\in U$ such that
\begin{equation}
b(u,v)=\ell(v)\quad\forall v\in V,
\label{eq:DPGsource}
\end{equation}
where $U$ is a trial Hilbert space and $V$ is a test Hilbert space.
The bilinear form $b:U\times V\to\CO$ satisfies the assumptions
\begin{align}
&b(u,v)=0\quad\forall v\in V\Longrightarrow u=0 \label{eq:uniqueness}\\
& 0<C_1  := \inf_{u \in U\setminus \{ 0\}} \sup_{v\in V\setminus \{ 0\}}
\frac{|b(w,v)|}{\Vert w\Vert_U \Vert v \Vert_V} 
\quad\forall v\in V \label{eq:inf-sup-continuity}
\end{align}
$\ell:V\to\CO$ is a linear form.

The DPG formulation of~\eqref{eq:DPGsource} is obtained by introducing a
discrete space $U_h\subset U$ and an \emph{optimal} test function space given
by $V_{opt}=T(U_h)$, where $T:U\to V$ is the trial-to-test operator defined
as: find $Tu\in V$ such that
\begin{equation}
(Tu,v)_V=b(u,v)\quad\forall v\in V.
\label{eq:trial-to-test}
\end{equation}
The discrete \emph{source} problem is: find $u_h\in U_h$ such that
\begin{equation}
b(u_h,v)=\ell(v)\quad\forall v\in V_{opt}.
\label{eq:DPGsourceh}
\end{equation}
This is an ideal setting, since the actual computation of the test function
space $V_{opt}$ is not feasible in most applications. In general a
\emph{practical} DPG method is adopted where the test space $V_{opt}$ is
replaced by $V_{opt,h}$, obtained after introducing a finite dimensional
subspace $V_h\subset V$ and defining a discrete trial-to-test operator $T_h$
as in~\eqref{eq:trial-to-test} with $V$ replaced by $V_h$. Then the practical
optimal test space is $V_{opt,h}=T_h(U_h)$.
If $V_h$ is a space of discontinuous piecewise
polynomials, then the computation of the test functions is cheap, involving
the solution of a block diagonal system.

We assume that there exists a linear operator
$\Pi:V\to V_h$ and $C_\Pi$ such that for all $u_h\in U_h$ and all $v\in V$
\begin{align}
    &b(u_h,v-\Pi v)=0\\
    &\Vert\Pi v\Vert_V \leq C_\Pi \Vert v \Vert_V. 
    \label{eq:operatorPI}
\end{align}
A fundamental characterization of the solution $u_h$ of the DPG system is
given by the following mixed problem that is defined only via the discrete
spaces $U_h$ and $V_h$ without the need of the trial-to-test operator $T$:
find $u_h\in U_h$ and $\varepsilon_h\in V_h$ such that
\begin{equation}
\left\{
\aligned
&(\varepsilon_h,v_h)_V+b(u_h,v_h)=\ell(v_h)&&\forall v_h\in V_h\\
&\overline{b(z_h,\varepsilon_h)}=0&&\forall z_h\in U_h.
\endaligned
\right.
\label{eq:DPGmixedh}
\end{equation}

We are now ready to introduce the eigenvalue problem associated
with~\eqref{eq:DPGsource} and its approximation corresponding
to~\eqref{eq:DPGmixedh}.

Usually the space $U$ consists of two components and can be presented as
$U=U_0\times U_1$, where $U_0$ is a functional space defined on $\Omega$
(volumetric part) and $U_1$ is the remaining part that can be defined on
$\Omega$ or on the skeleton of a given triangulation.
Let $\H$ be a Hilbert pivot space so that we have the usual triplet
$U_0\subset\H\simeq\H'\subset U_0'$ and consider a bilinear form
$m:\H\times V\to\CO$.
The continuous eigenvalue problem is: find eigenvalues $\lambda\in\CO$ and
eigenfunctions $u=(u_0,u_1)\in U=U_0\times U_1$ with $u_0\ne0$ such that
\begin{equation}
b(u,v)=\lambda m(u_0,v)\quad\forall v\in V.
\label{eq:DPGeig}
\end{equation}
In order to state the appropriate (compactness) assumptions, we introduce the
solution operator $\Tf:\H\to\H$ as follows: $\Tf f\in\H$ is the component
$u_0$ of the solution $u\in U$ to
\begin{equation}
b(u,v)=m(f,v)\quad\forall v\in V.
\label{eq:defT}
\end{equation}
We assume that~\eqref{eq:defT} is uniquely solvable and the operator $\Tf$ is
compact.

The discrete space $U_h$ is analogously made of two components
$U_{0,h}\subset U_0$ and $U_{1,h}\subset U_1$. The DPG discretization,
corresponding to the mixed formulation~\eqref{eq:DPGmixedh}, is given by: find
$\lambda_h\in\CO$ such that for some
$u_h=(u_{0,h},u_{1,h})\in U_h=U_{0,h}\times U_{1,h}$ with $u_{0,h}\ne0$ and
some $\varepsilon_h\in V_h$ it holds
\begin{equation}
\left\{
\aligned
&(\varepsilon_h,v_h)_V+b(u_h,v_h)=\lambda_hm(u_{0,h},v_h)&&\forall v_h\in V_h\\
&\overline{b(z_h,\varepsilon_h)}=0&&\forall z_h\in U_h.
\endaligned
\right.
\label{eq:DPGeigh}
\end{equation}

The matrix form of this formulation is given by
\begin{equation}
\left(
\begin{matrix}
\sfA&\sfB^\top\\
\sfB&\sfzero
\end{matrix}
\right)
\left(
\begin{matrix}
\sfx\\\sfy
\end{matrix}
\right)=
\lambda\left(
\begin{matrix}
\sfzero&\sfM\\
\sfzero&\sfzero
\end{matrix}
\right)
\left(
\begin{matrix}
\sfx\\\sfy
\end{matrix}
\right)
\label{eq:matrix}
\end{equation}
where $\sfx$ is the vector representation of $\varepsilon_h\in V_h$ and $\sfy$
of $u_h\in U_h$.

We introduce the discrete counterpart $\Tfh:\H\to\H$ of the solution operator
$\Tf$ as follows: $\Tfh f\in U_{0,h}\subset\H$ is the component
$u_{0,h}\in U_{0,h}$ of the solution $u_h\in U$ of the following problem, for
some $\varepsilon_h\in V_h$,
\begin{equation}
\left\{
\aligned
&(\varepsilon_h,v_h)_V+b(u_h,v_h)=m(f,v_h)&&\forall v_h\in V_h\\
&\overline{b(z_h,\varepsilon_h)}=0&&\forall z_h\in U_h.
\endaligned
\right.
\label{eq:defTh}
\end{equation}

In order show a priori estimates for the eigensolution computed with the DPG
method we are going to use the classical Bab\v uska--Osborn
theory~\cite{BaOs,acta}. Let us denote by $\lambda_i$, $i=1,\dots$, the
eigenvalues of the continuous problem~\eqref{eq:DPGeig} sorted such that
\[
0<|\lambda_1|\le|\lambda_2|\le\cdots
\]
and by $E_i=\mathrm{span}(u_{0,i})$, $i=1,\dots$, the corresponding
eigenspaces. In case of multiple eigenvalues we repeat them so that each $E_i$
is one dimensional. Analogous notation $\lambda_{i,h}$ and $E_{i,h}$,
$i=1,\dots$, is adopted for the discrete problem~\eqref{eq:DPGeigh}.

\begin{theorem} \label{th:babuska_osborn}
If the following convergence in norm holds true
\begin{equation}
\|(\Tf-\Tfh) f\|_\H\le\rho(h)\|f\|_\H\quad\forall f\in\H
\label{eq:cunif}
\end{equation}
with $\rho(h)$ tending to zero as $h$ goes to zero, then the discrete
eigenvalues and eigenfunctions converge to the continuous ones.
That is, any compact set $K$ included in the resolvent set of $\Tf$ is
included in the resolvent set of $\Tfh$ for $h$ small enough (absence of
spurious modes); moreover, if $\lambda_i$ is an eigenvalue of algebraic
multiplicity $m$ then there are exactly $m$ discrete eigenvalues
$\lambda_{i_j,h}$, $j=1,\dots m$, tending to $\lambda_i$ (convergence).
\end{theorem}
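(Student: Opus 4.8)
The plan is to reduce the statement to the classical spectral approximation theory for compact operators converging in norm, as developed in~\cite{BaOs,acta}. The first step is to recast both eigenvalue problems as spectral problems for the solution operators. Comparing~\eqref{eq:DPGeig} with the definition~\eqref{eq:defT} of $\Tf$, one sees that $(\lambda,u_0)$ with $u_0\neq0$ solves~\eqref{eq:DPGeig} if and only if $\Tf u_0=\mu u_0$ with $\mu=1/\lambda$: taking $f=u_0$ in~\eqref{eq:defT} and comparing with $b(u,v)=\lambda m(u_0,v)$ shows that $u_0$ is the volumetric component of the solution driven by the datum $\lambda^{-1}u_0$. The same computation applied to~\eqref{eq:DPGeigh} and~\eqref{eq:defTh} gives that $(\lambda_h,u_{0,h})$ solves the discrete eigenproblem if and only if $\Tfh u_{0,h}=\lambda_h^{-1}u_{0,h}$. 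Hence the eigenvalues of~\eqref{eq:DPGeig} and~\eqref{eq:DPGeigh} are precisely the reciprocals of the nonzero eigenvalues of $\Tf$ and $\Tfh$, and the corresponding eigenspaces, and more generally the generalized eigenspaces and thus the algebraic multiplicities, coincide under the map $\lambda\mapsto1/\lambda$.

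The second step is to observe that hypothesis~\eqref{eq:cunif} is exactly the statement that $\Tfh\to\Tf$ in the operator norm of $\mathcal{L}(\H)$, with $\|\Tf-\Tfh\|_{\mathcal{L}(\H)}\le\rho(h)\to0$. Since $\Tf$ is compact by assumption, its spectrum consists of $0$ together with a sequence of isolated eigenvalues of finite algebraic multiplicity, so that the whole machinery of~\cite{BaOs,acta} applies. For the absence of spurious modes, let $K$ be a compact subset of the resolvent set of $\Tf$. The resolvent $z\mapsto(z-\Tf)^{-1}$ is continuous, hence uniformly bounded on $K$ by some $M$. Writing $z-\Tfh=(z-\Tf)\bigl(I-(z-\Tf)^{-1}(\Tf-\Tfh)\bigr)$ and using $\|(z-\Tf)^{-1}(\Tf-\Tfh)\|\le M\rho(h)<1$ for $h$ small, a Neumann series argument shows that $(z-\Tfh)^{-1}$ exists and is uniformly bounded on $K$; therefore $K$ is contained in the resolvent set of $\Tfh$ for $h$ small, and no discrete eigenvalue can lie in $K$.

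For convergence with the correct multiplicity, I would fix an eigenvalue $\mu_i=1/\lambda_i$ of $\Tf$ of algebraic multiplicity $m$ and choose a circle $\Gamma$ enclosing $\mu_i$ and no other point of the spectrum of $\Tf$. The associated Riesz spectral projections

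\[
E_i=\frac{1}{2\pi i}\oint_\Gamma(z-\Tf)^{-1}\dd z,\qquad
E_{i,h}=\frac{1}{2\pi i}\oint_\Gamma(z-\Tfh)^{-1}\dd z
\]

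are well defined for $h$ small by the previous step, and the uniform convergence of the resolvents on $\Gamma$ yields $\|E_i-E_{i,h}\|_{\mathcal{L}(\H)}\to0$. Since the rank of a projection is stable under perturbations of norm smaller than one, we get $\rank E_{i,h}=\rank E_i=m$ for $h$ small, which means that exactly $m$ discrete eigenvalues of $\Tfh$, counted with algebraic multiplicity, lie inside $\Gamma$. Translating back through the reciprocal correspondence of the first step, exactly $m$ discrete eigenvalues $\lambda_{i_j,h}$ converge to $\lambda_i$, and the convergence of the associated eigenfunctions follows from the norm convergence of the spectral projections.

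I expect essentially no analytic difficulty beyond careful bookkeeping, since the heart of the argument is the cited abstract theory and~\eqref{eq:cunif} supplies precisely its hypothesis. The one point requiring genuine care is the first step: one must verify cleanly that the saddle-point formulations~\eqref{eq:DPGeigh} and~\eqref{eq:defTh}, which carry the auxiliary residual variable $\varepsilon_h$, indeed define a well-posed operator $\Tfh$ on $\H$ whose eigenpairs are in bijection with the discrete eigenpairs, so that convergence of $\Tfh$ to $\Tf$ is the correct object to which the Babu\v ska--Osborn framework is applied.
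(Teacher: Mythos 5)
Your proposal is correct and matches the paper's approach: the paper gives no proof of this theorem, recalling it as the classical Babu\v ska--Osborn result from the cited references, and your argument (the bijection $\lambda\mapsto1/\lambda$ between eigenpairs of \eqref{eq:DPGeig}--\eqref{eq:DPGeigh} and those of the solution operators, the reading of \eqref{eq:cunif} as $\|\Tf-\Tfh\|_{\mathcal L(\H)}\le\rho(h)\to0$, the Neumann-series resolvent bound on compact subsets of the resolvent set, and the rank stability of the Riesz projections) is exactly the standard proof of that classical theory. The only blemish is a sign slip in the resolvent factorization, which should read $z-\Tfh=(z-\Tf)\bigl(I+(z-\Tf)^{-1}(\Tf-\Tfh)\bigr)$; this does not affect the Neumann-series argument since the smallness condition $M\rho(h)<1$ is unchanged.
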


In order to estimate the rate of convergence, as usual, we shall make use of
the gap between subspaces of Hilbert spaces defined as
\[
\gap(A,B)=\max\{\ggap(A,B),\ggap(B,A)\},
\]
where
\[
\ggap(A,B)=\sup_{\substack{a\in A\\\|a\|=1}}\ggap(a,B)
\]
with
\[
\ggap(a,B)=\inf_{b\in B}\|a-b\|_W
\]
for $A$ and $B$ closed subspaces of a Hilbert space $W$.
If $W=U_0$ and $E$ is the $m$ dimensional eigenspace of the continuous problem
corresponding to $\lambda_i$ (see the setting of
Theorem~\ref{th:babuska_osborn}), we introduce the following quantity related
to $E$
\[
\gamma_h=\|(\Tf-\Tfh)|_{E}\|_{\mathcal L(U_0)}
\]
and, if $E^*$ is the corresponding eigenspace of the adjoint operator $\Tf^*$,
we consider the following quantity
\[
\gamma^*_h=\|(\Tf^*-\Tfh^*)|_{E^*}\|_{\mathcal{L}(U_0')},
\]
where $\Tfh^*$ is the discrete solution operator associated with the adjoint
problem.
Then we recall the following classical result.
\begin{theorem}
Under the hypothesis of Theorem~\ref{th:babuska_osborn} it holds
\[
\gap(E_h,E)\le C\gamma_h,
\]
where $E_h$ is the direct sum of the eigenspaces of the $m$ eigenvalues
approximating $\lambda_i$. Moreover, if $\alpha$ is the ascent multiplicity of
$\lambda_i$, it holds
\[
\max_{j=1,\dots,m}|\lambda_i-\lambda_{i_j,h}|\le
C(\gamma_h\gamma_h^*)^{1/\alpha}.
\]
\label{th:babuska_osborn2}
\end{theorem}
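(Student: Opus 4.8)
The plan is to recognize Theorem~\ref{th:babuska_osborn2} as an instance of the Babu\v ska--Osborn spectral approximation theory for the compact operator $\Tf$, so that the argument follows the classical line of~\cite{BaOs,acta}. First I would translate the generalized eigenvalue problem~\eqref{eq:DPGeig} into a spectral problem for $\Tf$: comparing~\eqref{eq:DPGeig} with~\eqref{eq:defT} gives $b(u,v)=\lambda m(u_0,v)=m(\lambda u_0,v)$, hence $\Tf u_0=\lambda^{-1}u_0$, so the eigenvalues $\mu_i=1/\lambda_i$ of $\Tf$ correspond to the $\lambda_i$ and the eigenspaces coincide; the same correspondence holds discretely between~\eqref{eq:DPGeigh} and~\eqref{eq:defTh}. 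Fixing $\lambda_i$ and a circle $\Gamma$ in the complex plane enclosing $\mu_i=1/\lambda_i$ but no other point of the spectrum of $\Tf$, I would introduce the Riesz spectral projections
\[
E=\frac{1}{2\pi i}\oint_\Gamma(z-\Tf)^{-1}\dd z,\qquad
E_h=\frac{1}{2\pi i}\oint_\Gamma(z-\Tfh)^{-1}\dd z,
\]
whose ranges are the (generalized) eigenspaces. By the norm convergence~\eqref{eq:cunif} used in Theorem~\ref{th:babuska_osborn}, the resolvents $(z-\Tfh)^{-1}$ are uniformly bounded on $\Gamma$ for $h$ small, so $\|E-E_h\|\to0$ and in particular $\rank E_h=\rank E=m$ for $h$ small.

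For the gap estimate I would insert the resolvent identity $(z-\Tf)^{-1}-(z-\Tfh)^{-1}=(z-\Tfh)^{-1}(\Tf-\Tfh)(z-\Tf)^{-1}$ into the contour integral, obtaining for $x\in E$
\[
(E-E_h)x=\frac{1}{2\pi i}\oint_\Gamma(z-\Tfh)^{-1}(\Tf-\Tfh)(z-\Tf)^{-1}x\dd z.
\]
Since $E$ is invariant under $(z-\Tf)^{-1}$, the argument $(z-\Tf)^{-1}x$ remains in $E$, so $\Tf-\Tfh$ is only ever evaluated on $E$; bounding it by $\gamma_h$, together with the uniform resolvent bounds on $\Gamma$ and the finite length of $\Gamma$, yields $\|(E-E_h)|_E\|\le C\gamma_h$. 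Because $Ex=x$ for $x\in E$, this gives $\ggap(E,E_h)\le C\gamma_h$, and once $\rank E_h=\rank E$ the reverse inequality $\ggap(E_h,E)\le C\gamma_h$ follows by the same manipulation, so that $\gap(E_h,E)\le C\gamma_h$.

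For the eigenvalue estimate I would pass to the finite-dimensional operators $\Tf|_E$ and a compression $\widehat{T}_h$ of $\Tfh$ to the range $R(E_h)$, related to $\Tf|_E$ through the near-identity isomorphism $E|_{R(E_h)}$, whose eigenvalues are precisely the $\mu_{i_j,h}=1/\lambda_{i_j,h}$ clustering near $\mu_i$. Comparing $\Tf|_E$ with $\widehat{T}_h$ and testing against the adjoint eigenspace $E^*$ produces bilinear expressions of the form $\langle\phi^*,(\Tf-\Tfh)\phi\rangle$ with $\phi\in E$, $\phi^*\in E^*$, which are controlled by the product $\gamma_h\gamma_h^*$ rather than by $\gamma_h$ alone. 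The ascent $\alpha$ of $\lambda_i$ enters through the Jordan structure of $\Tf|_E$: for a defective eigenvalue the splitting of the perturbed eigenvalues is governed by the $\alpha$-th root of the size of the perturbation of the compressed operator, whence the exponent $1/\alpha$. Translating $\mu=1/\lambda$ back to $\lambda$, a smooth change away from $0$ and harmless since $\lambda_i\ne0$, gives $\max_j|\lambda_i-\lambda_{i_j,h}|\le C(\gamma_h\gamma_h^*)^{1/\alpha}$.

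The main obstacle is this last estimate with the ascent exponent: establishing that the perturbation of a defective eigenvalue scales like the $\alpha$-th root requires the careful Jordan-form and characteristic-polynomial analysis of Osborn, and identifying the governing quantity as the product $\gamma_h\gamma_h^*$ rather than $\gamma_h^2$ or $\gamma_h$ hinges on exploiting the duality between $E$ and $E^*$ correctly. The gap estimate, by contrast, is a routine consequence of the resolvent identity together with the uniform convergence~\eqref{eq:cunif}.
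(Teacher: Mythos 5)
The paper does not prove this theorem: it is explicitly ``recalled'' as a classical result of the Babu\v ska--Osborn spectral approximation theory and attributed to \cite{BaOs,acta}, so there is no in-paper argument to compare against. Your sketch is a faithful reconstruction of that classical proof --- reduction of \eqref{eq:DPGeig} to the spectral problem for the compact operator $\Tf$ via $\mu=1/\lambda$, Riesz projections on a contour isolating $\mu_i$, uniform resolvent bounds from the norm convergence \eqref{eq:cunif}, the resolvent identity for the gap bound, and the compression to $R(E_h)$ with the ascent entering through the Jordan structure --- and the roadmap is correct.

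The one step that does not go through as written is your claim that the bilinear terms $\langle\phi^*,(\Tf-\Tfh)\phi\rangle$ with $\phi\in E$, $\phi^*\in E^*$ are ``controlled by the product $\gamma_h\gamma_h^*$'' merely by testing against the adjoint eigenspace. A priori these terms are only $O(\gamma_h)\,\|\phi^*\|$, i.e.\ of the same order as the gap, and Osborn's eigenvalue estimate in its raw form reads
\[
|\lambda_i-\lambda_{i_j,h}|^{\alpha}\le
C\Bigl(\sum_{j,k}\bigl|\langle(\Tf-\Tfh)\phi_j,\phi_k^*\rangle\bigr|
+\gamma_h\,\gamma_h^*\Bigr),
\]
so the whole content of the ``double order'' statement is precisely that the consistency terms $\langle(\Tf-\Tfh)\phi_j,\phi_k^*\rangle$ can themselves be bounded by $\gamma_h\gamma_h^*$. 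That upgrade is not a duality formality: it uses the fact that $\Tfh$ is a (Petrov--)Galerkin projection of $\Tf$, so that one may insert the discrete adjoint solution operator and invoke Galerkin orthogonality to write each such term as a product of a primal and an adjoint approximation error. You should either carry out that insertion explicitly or state the theorem with the extra sum of consistency terms, as \cite{BaOs,acta} do, and then verify separately (as the paper implicitly does when it estimates $\gamma_h^*$ via the adjoint problem \eqref{eq:dual}) that these terms are of the required order for the DPG discretizations at hand.
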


\section{The Laplace eigenvalue problem}
\label{se:laplace}

Let $\Omega\subset\mathbb R^2$ be a bounded polygonal domain.
We are interested in the standard Dirichlet eigenvalue problem for the Poisson
equation: find $\lambda$ such that for a nonzero $u$ we have
\[
\left\{
\aligned
-\Delta u &= \lambda u&&\text{in }\Omega\\
u&=0&&\text{on }\partial\Omega.
\endaligned
\right.
\]
We look at a conforming triangulation $\Omega_h$ and its skeleton
$\partial\Omega_h$.

We will use two different formulations; namely, the so
called \emph{primal} and \emph{ultra-weak} formulations (see, for
instance,~\cite{DPGPrimal,DPGPoisson,practical}).
\subsection{Primal formulation}
\label{se:primal}
The formulation we are considering has been presented in~\cite{DPGPrimal} and
fits within our setting with the following choices
\[
\aligned
&U=U_0\times U_1\\
&U_0=H^1_0(\Omega)\\
&U_1=H^{-1/2}(\partial\Omega_h)\\
&\H=H^1(\Omega)\\
&V=H^1(\Omega_h)\\
&b(u,\hat\sigma_n;v)=
(\grad u,\grad v)_{\Omega_h}-\dual{\hat\sigma_n}{v}{\partial\Omega_h}\\
&m(u,v)=(u,v)_{\Omega_h},
\endaligned
\]
where as usual the symbol $\dual{\cdot}{\cdot}{\partial\Omega_h}$ denotes the
action of a functional in $H^{-1/2}(\partial\Omega_h)$ and
$(\cdot,\cdot)_{\Omega_h}$ the broken $L^2$ scalar product. We recall the
definition of $H^{-1/2}(\partial\Omega_h)$ as
\[
\left\{\tau\in\bigotimes_K H^{-1/2}(\partial K):
\tau|_{\partial K}=\bftau\cdot\bfn|_{\partial K}
\text{ for some }\bftau\in\Hdiv,\ \forall K\in\Omega_h\right\}.
\]

We make use of the following discrete spaces for any integer $k\geq 1$
\[
\aligned
    &U_{h,0} =  S^k_0(\Omega_h) \\
    &U_{h,1} =  P_{k-1}(\partial \Omega_h) \cap U_1\\
    &U_h = U_{h,0} \times U_{h,1} \\
    &V_h = P_{k+1}(\Omega_h).
\endaligned
\]
Here we denote 
\begin{align*}
    &S^k_0 (\Omega_h) := P_k(\Omega_h) \cap C(\bar\Omega). 
\end{align*}
\begin{remark}
In this section we are using the standard notation $u$ (resp.\ $u_h$) for the
volumetric part of the solution and $\hat\sigma$ (resp.\ $\hat\sigma_n$) for
the skeleton part. They correspond to $u_0$ (resp.\ $u_{0,h}$) and $u_1$
(resp.\ $u_{1,h}$) in the abstract presentation of the previous section.
Analogous notation will be used in the next section for the ultra-weak
formulation.
\end{remark}

The uniform convergence~\eqref{eq:cunif} is usually proved by employing some a
priori estimates of the source problem.
The standard estimate for the source from \cite{DPGPrimal} reads as follows 
\begin{equation}
\aligned
&\Vert u - u_h \Vert_{H^1(\Omega)} + \Vert \hat \sigma_n -\hat \sigma_{h,n} 
    \Vert_{H^{-1/2}(\partial \Omega_h)}\\
&\qquad
\le C\inf_{(w_h,\hat r_{h,n} )\in U_h} (\Vert u - w_h \Vert_{H^1(\Omega)} 
+ \Vert \hat \sigma_n -\hat r_{h,n} \Vert_{H^{-1/2}(\partial \Omega_h)}).
\endaligned
\label{eq:DPGprimalconvergence}
\end{equation}
Since we have chosen $\H=H^1(\Omega)$, the uniform convergence follows
from~\eqref{eq:DPGprimalconvergence} as it is shown in the next proposition.

\begin{proposition} \label{uniform}
Let $(u,\hat\sigma_n)\in U$ be the solution of the source
problem~\eqref{eq:DPGsource} with right hand side $f$ in $H^1(\Omega)$ and
assume that $u$ belongs to $H^{1+s}(\Omega)$ for some $s\in (1/2, k+1]$, where
$k$ is the order of the approximation introduced above.
Then the uniform convergence~\eqref{eq:cunif} holds true.
\end{proposition}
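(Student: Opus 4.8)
The plan is to reduce the claimed uniform convergence~\eqref{eq:cunif} to the quasi-optimality estimate~\eqref{eq:DPGprimalconvergence}, combined with classical interpolation and elliptic-regularity bounds. The first observation is that, for the primal formulation, the right-hand side appearing in~\eqref{eq:defT} and~\eqref{eq:defTh} is $m(f,v)=(f,v)_{\Omega_h}$, so that $\Tf f$ is precisely the volumetric component $u$ of the continuous source solution, while $\Tfh f$ is the volumetric component $u_h$ of the practical DPG solution. Since the mixed system~\eqref{eq:defTh} characterises the DPG approximation satisfying~\eqref{eq:DPGprimalconvergence}, and $\H=H^1(\Omega)$, we have $\|(\Tf-\Tfh)f\|_{\H}=\|u-u_h\|_{H^1(\Omega)}$, and it suffices to control this quantity by $\rho(h)\|f\|_{H^1(\Omega)}$ with $\rho(h)\to0$.

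Next I would invoke~\eqref{eq:DPGprimalconvergence} to bound $\|u-u_h\|_{H^1(\Omega)}$ (and, at no extra cost, the flux error) by the best approximation error of the pair $(u,\hat\sigma_n)$ in $U_h=S^k_0(\Omega_h)\times(P_{k-1}(\partial\Omega_h)\cap U_1)$. For the volumetric term I would take $w_h$ to be a quasi-interpolant (Scott--Zhang or Cl\'ement) of $u$ in $S^k_0(\Omega_h)$; since $u\in H^{1+s}(\Omega)$, the standard estimate gives $\|u-w_h\|_{H^1(\Omega)}\le Ch^{\min(s,k)}\|u\|_{H^{1+s}(\Omega)}$. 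For the skeleton term one uses that $\hat\sigma_n$ is the normal trace of $\bfsigma=\grad u$, which satisfies $\bfsigma\in(H^s(\Omega))^2$ with $\div\bfsigma=-f\in L^2(\Omega)$, hence $\bfsigma\in\Hdiv$; choosing $\hat r_{h,n}$ to be a suitable projection onto $P_{k-1}(\partial\Omega_h)$ then yields a bound of the form $\|\hat\sigma_n-\hat r_{h,n}\|_{H^{-1/2}(\partial\Omega_h)}\le Ch^{\min(s,k)}(\|u\|_{H^{1+s}(\Omega)}+\|f\|_{L^2(\Omega)})$.

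Finally, elliptic regularity on the polygonal domain $\Omega$ supplies both the range of admissible exponents and the dependence on $f$: there is an $s\in(1/2,1]$ (determined by the largest interior angle, with $s=1$ in the convex case) such that $\|u\|_{H^{1+s}(\Omega)}\le C\|f\|_{L^2(\Omega)}\le C\|f\|_{H^1(\Omega)}$, uniformly in $f$. Combining this with the two interpolation bounds and~\eqref{eq:DPGprimalconvergence} gives $\|(\Tf-\Tfh)f\|_{\H}\le Ch^{\min(s,k)}\|f\|_{H^1(\Omega)}$, so that $\rho(h)=Ch^{\min(s,k)}$ tends to zero as $h\to0$, which is exactly~\eqref{eq:cunif}.

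I expect the only genuinely delicate point to be the skeleton estimate: measuring $\hat\sigma_n-\hat r_{h,n}$ in the negative fractional norm of $H^{-1/2}(\partial\Omega_h)$ requires care with the definition of that broken space via normal traces of $\Hdiv$ fields, and one must exhibit a projection whose approximation order is positive. Everything else is a routine combination of the given quasi-optimality estimate with classical interpolation and regularity results; and since we only need $\rho(h)\to0$ rather than a sharp rate, the precise value of the exponent in the skeleton bound is immaterial, which considerably simplifies that step.
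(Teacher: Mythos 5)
Your proof is correct and follows essentially the same route as the paper: both reduce~\eqref{eq:cunif} to the quasi-optimality estimate~\eqref{eq:DPGprimalconvergence} combined with interpolation and elliptic regularity, the paper simply delegating the resulting rate $\|u-u_h\|_{H^1(\Omega)}+\|\hat\sigma_n-\hat\sigma_{h,n}\|_{H^{-1/2}(\partial\Omega_h)}\le Ch^{s}\|f\|_{H^1(\Omega)}$ to a citation of the a priori estimate in the DPG literature. The additional care you take with the skeleton term and with the uniform bound $\|u\|_{H^{1+s}(\Omega)}\lesssim\|f\|_{H^1(\Omega)}$ (which the hypothesis states only pointwise in $f$) is precisely the content of that cited estimate, so nothing is missing.
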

\begin{proof}
Let $(u_h, \hat\sigma_{h})\in U_h$ be the numerical solution corresponding to
our right hand side $f$ in $L^2(\Omega)$. Then the regularity assumptions
together with the natural error estimates recalled above (see
also~\cite[Eq.~(5.1)]{DPGPrimal} imply
\[
\|u-u_h\|_{H^1(\Omega)}+
\|\hat\sigma_n-\hat\sigma_{h,n}\|_{H^{-1/2}(\partial\Omega_h)}
\le C h^s\|f\|_{H^1(\Omega)}.
\]
Due to the definitions $\Tf(f)=u$ and $\Tfh(f)=u_h$ this implies
\[
\|(\Tf-\Tfh)f\|_{H^1(\Omega)}\le C h^s\|f\|_{H^1(\Omega)},
\]
which proves the uniform convergence~\eqref{eq:cunif} with $\H=H^1(\Omega)$
and $\rho(h)\le C h^s$.
\end{proof}
\begin{corollary}
Proposition~\ref{uniform} holds also for the following discrete spaces for any
odd integer $k\ge 1$
\[
\aligned
&U_{h,0}=S^k_0(\Omega_h)\
&U_{h,1}=P_{k-1}(\partial\Omega_h)\cap U_1\\
&\tilde V_h=P_{k}(\Omega_h).
\endaligned
\]
\end{corollary}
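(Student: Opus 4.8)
The plan is to notice that the proof of Proposition~\ref{uniform} uses the test space only through the source quasi-optimality estimate~\eqref{eq:DPGprimalconvergence} and the identification $\H=H^1(\Omega)$. Since the trial space $U_h=U_{h,0}\times U_{h,1}$ is exactly the one already used, the best-approximation right-hand side of~\eqref{eq:DPGprimalconvergence}, and hence its bound by $Ch^s\|f\|_{H^1(\Omega)}$ under the stated regularity, is unchanged. So the corollary follows once~\eqref{eq:DPGprimalconvergence} is re-established with $V_h$ replaced by the reduced test space $\tilde V_h=P_k(\Omega_h)$; the remainder of the argument ($\Tf f=u$, $\Tfh f=u_h$, and the conclusion of~\eqref{eq:cunif} with $\rho(h)\le Ch^s$) is then copied verbatim.

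The only test-space-dependent ingredient of~\eqref{eq:DPGprimalconvergence} is the discrete inf-sup stability of the pair $(U_h,\tilde V_h)$, which by the standard DPG theory is implied by a Fortin operator $\Pi:V\to\tilde V_h$ satisfying~\eqref{eq:operatorPI}. For the primal form $b(u,\hat\sigma_n;v)=(\grad u,\grad v)_{\Omega_h}-\dual{\hat\sigma_n}{v}{\partial\Omega_h}$ the orthogonality in~\eqref{eq:operatorPI} decouples into a volume condition $(\grad w_h,\grad(v-\Pi v))_{\Omega_h}=0$ for all $w_h\in S^k_0(\Omega_h)$ and a skeleton condition $\dual{\hat r_{h,n}}{v-\Pi v}{\partial\Omega_h}=0$ for all $\hat r_{h,n}\in P_{k-1}(\partial\Omega_h)\cap U_1$; since $\hat r_{h,n}$ is single-valued on the skeleton, the latter only constrains the jump of $v-\Pi v$ across each interior edge (and its trace on boundary edges) against $P_{k-1}$. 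I would construct such a $\Pi$ mapping into $P_k$ and then obtain the uniform bound $\|\Pi v\|_V\le C_\Pi\|v\|_V$ by the usual scaling to the reference triangle together with trace and Bramble--Hilbert inequalities.

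The hard part is the very existence of such a $\Pi$ with values in $P_k$ rather than the enriched $P_{k+1}$ used in Proposition~\ref{uniform}. Integrating the volume term by parts elementwise and using the edge constraints to cancel the boundary contributions reduces the volume condition to orthogonality of $v-\Pi v$ against $\Delta w_h\in P_{k-2}(K)$ on each triangle; counting these interior moments together with the edge moments against $\dim P_k(K)$ shows that a purely elementwise matching into $P_k$ is deficient for $k\ge2$, so the enrichment cannot simply be dropped in a local construction. The resolution---and the point at which the hypothesis that $k$ be \emph{odd} should enter---is to exploit the slack coming from the jump-only nature of the skeleton condition, distributing the edge corrections between neighbouring elements so that only jumps, not individual one-sided traces, must be annihilated; I expect the parity of $k$ to be precisely what renders the associated moment system consistent and uniformly invertible on the triangle. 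This Fortin construction is the step I anticipate as the main obstacle. Once it is in place, the discrete inf-sup constant for $(U_h,\tilde V_h)$ is bounded below uniformly in $h$, so~\eqref{eq:DPGprimalconvergence} holds for $\tilde V_h=P_k$, and combining it with the unchanged best-approximation bounds yields the uniform convergence~\eqref{eq:cunif} exactly as in Proposition~\ref{uniform}, which proves the corollary.
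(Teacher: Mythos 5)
Your reduction is the right one and matches the paper's in outline: the only ingredient of Proposition~\ref{uniform} that depends on the test space is the quasi-optimality estimate~\eqref{eq:DPGprimalconvergence}, and once that is available for the pair $(U_h,\tilde V_h)$ the remainder of the argument ($\Tf f=u$, $\Tfh f=u_h$, $\rho(h)\le Ch^s$) carries over verbatim. The paper, however, does not reprove this quasi-optimality; it simply invokes \cite[Theorem~3.5]{DPGReduced}, which is precisely the statement that the reduced test space $P_k(\Omega_h)$ is admissible for odd $k$, and then repeats the argument of Proposition~\ref{uniform}.

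The gap in your version is that the one step carrying the actual mathematical content of the corollary --- the existence of a uniformly bounded Fortin operator $\Pi:V\to\tilde V_h$ satisfying~\eqref{eq:operatorPI}, equivalently the discrete inf-sup condition for $(U_h,\tilde V_h)$ --- is never established; you explicitly defer it as ``the main obstacle''. Your own moment count shows why it cannot be waved through: the local system (edge moments against $P_{k-1}$ on three edges plus interior moments against $\Delta P_k=P_{k-2}$) imposes $3k+k(k-1)/2$ constraints on $\dim P_k(K)=(k+1)(k+2)/2$ degrees of freedom, which is deficient for every $k\ge2$, odd or even, so parity alone does not rescue an elementwise construction. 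The proposed remedy --- annihilating only jumps and distributing corrections between neighbouring elements --- is stated as an expectation rather than proved, and that is exactly where the odd-$k$ restriction would have to be justified; nothing in your argument explains why even $k$ fails, nor how a non-local redistribution would retain the uniform bound $\|\Pi v\|_V\le C_\Pi\|v\|_V$. As written the corollary is therefore not proved: you must either carry out the Fortin construction for the reduced test space in detail or, as the authors do, import the corresponding quasi-optimality result from~\cite{DPGReduced}.
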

\begin{proof}
The result follows from \cite[Theorem 3.5]{DPGReduced} with the same arguments as in 
Proposition \ref{uniform}.
\end{proof}
We are now in a position to state our main conclusion of this section. For
readiness, we consider the case of a simple eigenvalue. Natural modifications
apply in case of higher multiplicity.
\begin{theorem}
Let us consider the DPG primal approximation of the Laplace eigenvalue problem
as discussed in Proposition~\ref{uniform}. Then the conclusions of
Theorems~\ref{th:babuska_osborn} and~\ref{th:babuska_osborn2} hold true. 
In particular, let $\lambda$ be a simple eigenvalue of the continuous problem
corresponding to an eigenspace $E$ belonging to $H^{1+s}(\Omega)$ and let
$\lambda_h$ be the approximation of $\lambda$ with discrete eigenspace $E_h$.
Then we have
\begin{equation}
\aligned
&\gap(E,E_h)\le Ch^{\tau}\\
&|\lambda-\lambda_h|\le Ch^{2\tau}
\endaligned
\label{eq:th1}
\end{equation}
with $\tau := \min \{s, k\}$.
\label{th:th1}
\end{theorem}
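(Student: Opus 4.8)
The plan is to combine the uniform convergence already established in Proposition~\ref{uniform} with the abstract Babuška--Osborn machinery of Theorems~\ref{th:babuska_osborn} and~\ref{th:babuska_osborn2}. The first theorem is immediate: Proposition~\ref{uniform} gives exactly the hypothesis~\eqref{eq:cunif} with $\rho(h)\le Ch^s\to 0$, so the qualitative conclusions (absence of spurious modes and convergence of the $m$ discrete eigenvalues to $\lambda$) follow at once. The real work is to control the two quantities $\gamma_h$ and $\gamma^*_h$ entering the rate estimate of Theorem~\ref{th:babuska_osborn2}, and to show each is $O(h^\tau)$ with $\tau=\min\{s,k\}$.

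For $\gamma_h=\|(\Tf-\Tfh)|_E\|_{\mathcal L(U_0)}$, the eigenspace $E$ consists of the normalized eigenfunction $u$, which by assumption lies in $H^{1+s}(\Omega)$. Applying $\Tf-\Tfh$ to $u$ amounts to solving the source problem with right-hand side $f=u$, so the estimate in the proof of Proposition~\ref{uniform} applies directly and yields $\|(\Tf-\Tfh)u\|_{H^1}\le Ch^s\|u\|_{H^1}$; since $E$ is one-dimensional this bounds the operator norm. The only subtlety is matching the regularity exponent to the best-approximation order of the discrete spaces: the convergence estimate~\eqref{eq:DPGprimalconvergence} delivers the infimum over $U_h=S^k_0(\Omega_h)\times(P_{k-1}(\partial\Omega_h)\cap U_1)$, and standard polynomial approximation in $S^k_0$ gives order $h^{\min\{s,k\}}$ rather than $h^s$ when $s>k$. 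I would therefore carefully track that the achievable rate is $h^\tau$ with $\tau=\min\{s,k\}$, which accounts for the saturation of the finite element order $k$.

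For $\gamma^*_h$ I would exploit the self-adjointness of the underlying Laplacian: the continuous eigenvalue problem is symmetric, so $\Tf$ is (up to the pivot identification) self-adjoint and $E^*$ coincides with $E$ with the same regularity. The adjoint DPG source problem has the same structure and the same a priori estimate, so $\gamma^*_h\le Ch^\tau$ by the identical argument. The potential obstacle here is the measurement norm: $\gamma^*_h$ is defined in $\mathcal L(U_0')$ rather than $\mathcal L(U_0)$, so I would verify that the dual-norm estimate follows from the primal one, either by the symmetry of the bilinear form $b$ and of $m(u,v)=(u,v)_{\Omega_h}$, or by a duality/Aubin--Nitsche argument, to confirm that no loss of order occurs when passing to the adjoint and to the dual norm.

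Assembling these pieces, Theorem~\ref{th:babuska_osborn2} gives $\gap(E,E_h)\le C\gamma_h\le Ch^\tau$, the first line of~\eqref{eq:th1}. For the eigenvalue, since $\lambda$ is simple the ascent multiplicity is $\alpha=1$, so the bound $|\lambda-\lambda_h|\le C(\gamma_h\gamma^*_h)^{1/\alpha}=C\gamma_h\gamma^*_h\le Ch^{2\tau}$ follows immediately, giving the characteristic squared (doubled) rate for eigenvalues of a self-adjoint problem. I expect the main obstacle to be the bookkeeping in the third paragraph: correctly justifying the $\mathcal L(U_0')$ estimate for $\gamma^*_h$ and confirming that the adjoint solution inherits the same $H^{1+s}$ regularity, since everything else reduces cleanly to the already-proven source estimate and the abstract theorems.
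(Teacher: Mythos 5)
Your overall strategy is the same as the paper's: Theorem~\ref{th:babuska_osborn} follows from Proposition~\ref{uniform}, $\gamma_h=O(h^\tau)$ comes from the quasi-optimal source estimate~\eqref{eq:DPGprimalconvergence} applied to $f=u\in H^{1+s}(\Omega)$, and the doubled rate for the simple eigenvalue follows from Theorem~\ref{th:babuska_osborn2} with ascent $\alpha=1$. The first, second and fourth paragraphs of your plan match the paper's proof step for step.

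The one place where your reasoning is too quick is the treatment of $\gamma^*_h$. You argue that ``the adjoint DPG source problem has the same structure and the same a priori estimate'' because the Laplacian is self-adjoint. Self-adjointness of the continuous operator does not transfer to the discrete DPG solution operator: $\Tfh$ is defined through the mixed system~\eqref{eq:defTh}, in which the datum enters only the first equation, so its adjoint is the system in which the datum enters the \emph{second} equation (tested against $U_{h,0}$) with a homogeneous first equation. The paper writes this adjoint problem out explicitly as~\eqref{eq:dual}, following~\cite{DPGReduced}; its strong form is the coupled pair $-\Delta\varepsilon^*=g$, $\Delta u^*=\varepsilon^*+g$, which is \emph{not} the original Poisson problem. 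What does carry over is the left-hand side (hence the quasi-optimality~\eqref{eq:ratedual}), and the regularity check then reduces to observing that both equations in the strong form are again Laplace problems, so the solution inherits the same Sobolev exponent $1+s$ and $\gamma^*_h=O(h^\tau)$. You correctly identify this as the main obstacle and your proposed fallback (a duality argument and a check that no order is lost in $\mathcal L(U_0')$) would lead you to essentially this computation, but as written the claim that the adjoint problem ``is the same'' and that $E^*=E$ at the discrete level is not justified and is in fact the substance of the paper's argument. With that step filled in as above, your proof is complete and coincides with the paper's.
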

\begin{proof}
Theorem~\ref{th:babuska_osborn} follows from the convergence in
norm~\eqref{eq:cunif} which we proved in Proposition~\ref{uniform}.

In order to verify the rates of convergence shown in~\eqref{eq:th1} we have to
compute the quantities $\gamma_h$ and $\gamma^*_h$ related to the convergence
of the DPG primal formulation and of its adjoint formulation, respectively.
For the primal formulation, we can estimate $\gamma_h$ by using the optimal
bound~\eqref{eq:DPGprimalconvergence}, thus obtaining
\[
\gamma_h=O(h^\tau)
\]
which gives the first in~\eqref{eq:th1}.

The adjoint problem corresponding to the primal formulation has been
considered extensively in~\cite{DPGReduced} for the proof of a duality
argument. In our notation, the continuous formulation of the adjoint problem
corresponding to~\eqref{eq:DPGmixedh} (see~\cite[Eq.~(20)]{DPGReduced}) is:
given $g\in\H$, find $\varepsilon^*\in V$ and
$\bfu^*=(u^*,\hat\sigma_n^*)\in U$ such that
\begin{equation}
\left\{
\aligned
&(\varepsilon^*,w)_V+(\grad u^*,\grad w)_{\Omega_h}
-\dual{\hat\sigma_n^*}{w}{\partial\Omega_h}=0&&\forall w\in V\\
&(\grad\varepsilon^*,\grad v)_{\Omega_h}=(g,v)_{\Omega_h}&&\forall v\in U_0\\
&\dual{\hat\tau_n}{\varepsilon^*}{\partial\Omega_h}=0&&
\forall\hat\tau_n\in U_1
\endaligned
\right.
\label{eq:dual}
\end{equation}
In order to estimate $\gamma^*_h$ we need to consider the discretization
of~\eqref{eq:dual}. It is apparent that the left hand side of the adjoint
problem is the same as the one corresponding the standard primal formulation,
so that, denoting by $(\varepsilon^*_h,u^*_h,\hat\sigma^*_{n,h})\in V_h\times
U_{h,0}\times U_{h,1}$ the discrete solution, the following quasi-optimal a
priori estimate holds true
\begin{equation}
\aligned
&\|\varepsilon^*-\varepsilon^*_h\|_V+\|u^*-u^*_h\|_{U_0}+
\|\hat\sigma_n^*-\hat\sigma^*_{n,h}\|_{U_1}\\
&\qquad\le\inf_{(\delta,v,\hat\tau_n)\in V_h\times U_{h,0}\times U_{h,1}}
(\|\varepsilon^*-\delta\|_V+\|u^*-v\|_{U_0}
+\|\hat\sigma_n^*-\hat\tau_n\|_{U_1})
\endaligned
\label{eq:ratedual}
\end{equation}
It remains to check the regularity of the solution of the dual
problem~\eqref{eq:dual}. In~\cite[Eq.~(21)]{DPGReduced} it is observed that the strong
form of the dual problem is as follows
\[
\aligned
&-\Delta\varepsilon^*=g&&\text{in }\Omega\\
&\varepsilon^*=0&&\text{on }\partial\Omega\\
&\Delta u^*=\varepsilon^*+g&&\text{in }\Omega\\
&u^*=0&&\text{on }\partial\Omega\\
&\hat\sigma_n^*=\grad(\varepsilon^*+u^*)\cdot\bfn&&\text{on }\partial K\
\forall K\in\Omega_h
\endaligned
\]
In particular, since the dual problem involves the Laplace operator, the
regularity of its solution will be related to the same Sobolev exponent $1+s$
as for the original Laplace problem.
This implies that from the rate of convergence predicted
by~\eqref{eq:ratedual} we can obtain
\[
\gamma^*_h=O(h^\tau)
\]
Hence, the double order of convergence for the eigenvalues is proved, which
concludes our proof.
\end{proof}
Theorem~\ref{th:th1} estimates the eigenfunction error in the energy norm of
$U_0$. It is interesting to observe that the when the error is estimated in
$L^2(\Omega)$ then higher order can be achieved. This is stated in the
following proposition.
\begin{proposition}
Under the same assumptions and notation of Theorem~\ref{th:th1}, the
following estimate holds true
\begin{equation}
\gap_0(E,E_h)\le Ch^{\upsilon+\tau}
\label{eq:pr1}
\end{equation}
with $\upsilon=\min\{s,1\}$, where $\gap_0$ denotes the gap in the
$L^2(\Omega)$ norm.
\label{pr:pr1}
\end{proposition}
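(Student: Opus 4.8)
The plan is to improve the energy-norm estimate $\gamma_h = O(h^\tau)$ to an $L^2$-norm estimate by exploiting the standard Aubin--Nitsche duality argument, adapted to the DPG setting. The key observation is that $\gap_0(E,E_h)$ is controlled by $\|(\Tf-\Tfh)|_E\|_{\mathcal L(L^2(\Omega))}$, so it suffices to bound the $L^2$-norm of $(\Tf-\Tfh)f$ for $f$ in the eigenspace $E$. I would therefore fix such an $f$, set $u=\Tf f$ and $u_h=\Tfh f$, and estimate $\|u-u_h\|_{L^2(\Omega)}$ by testing against the solution of an auxiliary dual problem.

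First I would introduce the dual source problem: given $\phi\in L^2(\Omega)$, let $(z,\hat\rho_n)\in U$ solve the adjoint problem with right hand side $\phi$, in the spirit of~\eqref{eq:dual}. By the elliptic regularity of the Laplace operator on the polygonal domain $\Omega$, this dual solution gains regularity, with $z\in H^{1+\upsilon}(\Omega)$ and $\|z\|_{H^{1+\upsilon}(\Omega)}\le C\|\phi\|_{L^2(\Omega)}$, where $\upsilon=\min\{s,1\}$ reflects the limited $H^{1+\upsilon}$ regularity available on a nonconvex polygon. Writing $\|u-u_h\|_{L^2(\Omega)}=\sup_{\phi}(u-u_h,\phi)_{\Omega_h}/\|\phi\|_{L^2(\Omega)}$, I would rewrite the numerator $(u-u_h,\phi)_{\Omega_h}$ using the dual problem and the Galerkin orthogonality built into the mixed formulation~\eqref{eq:DPGmixedh}. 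The crucial step is that, because the left-hand side operator of the DPG system and of its adjoint coincide (as already exploited in the proof of Theorem~\ref{th:th1}), the error $u-u_h$ satisfies an orthogonality relation against the discrete dual solution; this lets me insert an arbitrary discrete dual approximant and bound the numerator by the product of two approximation errors.

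Concretely, the duality identity would yield a bound of the form
\[
|(u-u_h,\phi)_{\Omega_h}|\le C\bigl(\|u-u_h\|_{U_0}+\|\hat\sigma_n-\hat\sigma_{h,n}\|_{U_1}\bigr)\,\inf_{(\,\delta,v,\hat\tau_n\,)}\bigl(\|z-v\|_{U_0}+\cdots\bigr).
\]
The first factor is $O(h^\tau)$ by~\eqref{eq:DPGprimalconvergence} applied to the primal solution, exactly as in Theorem~\ref{th:th1}. The second factor, the best approximation of the dual solution, is $O(h^\upsilon)$ by the $H^{1+\upsilon}$ regularity of $z$ together with standard interpolation estimates for the spaces $U_{h,0}$, $U_{h,1}$, and $V_h$. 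Multiplying these gives $|(u-u_h,\phi)_{\Omega_h}|\le Ch^{\tau+\upsilon}\|\phi\|_{L^2(\Omega)}$, hence $\|(\Tf-\Tfh)|_E\|_{\mathcal L(L^2(\Omega))}=O(h^{\tau+\upsilon})$, and the estimate~\eqref{eq:pr1} follows from Theorem~\ref{th:babuska_osborn2} with the $L^2$-gap in place of the $U_0$-gap.

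The main obstacle I anticipate is the careful bookkeeping of the skeleton variable $\hat\sigma_n\in H^{-1/2}(\partial\Omega_h)$ and the error contribution $\varepsilon_h\in V_h$ inside the duality argument: the DPG mixed system~\eqref{eq:DPGmixedh} is not a plain conforming Galerkin method, so one must verify that the residual $\varepsilon_h$ and the flux error interact correctly with the dual test functions and do not spoil the gain of a full power $h^\upsilon$. Establishing the precise orthogonality that allows the insertion of the discrete dual approximant—and confirming that the dual solution indeed enjoys the regularity $1+\upsilon$ rather than only $1+s$ when $s>1$—is where the real work lies; the remaining interpolation estimates are routine.
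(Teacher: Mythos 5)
Your proposal is correct and follows essentially the same route as the paper: the paper also derives \eqref{eq:pr1} from an Aubin--Nitsche duality argument for the DPG source problem (citing \cite[Thm.~3.1]{DPGReduced} and extending it from the full-regularity case $s=1$ to $s<1$), combined with the Bab\v uska--Osborn transfer to the eigenspaces. The only difference is presentational: the paper delegates the duality bookkeeping --- including the skeleton variable and the residual $\varepsilon_h$ that you rightly flag as the delicate part --- to the cited reference, whereas you sketch it directly.
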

\begin{proof}
The result is a consequence of the analogous one valid for the source problem
which has been proved in~\cite[Thm.~3.1]{DPGReduced} in the case of a convex
domain. This is a standard Aubin--Nitche duality argument. The general case
follows by inspecting the proof of Theorem~3.1 dealing with Assumption~2.9
where the regularity of the dual (adjoint) problem is discussed.
Reference~\cite{DPGReduced} studies the case of full regularity $s=1$; when
$s<1$ exactly the same arguments give the result with $C_3(h)=h^s$.
\end{proof}
We now switch back momentarily to the notation of the abstract setting, where
the component $u$ of the solution was denoted by $u_0$ and the component
$\hat\sigma$ by $u_1$. Then we observe that the convergence estimates in the
first of~\eqref{eq:th1} and in~\eqref{eq:pr1} imply that given $u_0\in E$
there exists $u_{0,h}\in E_h$ such that
\begin{equation}
\aligned
&\|u_0-u_{0,h}\|_{U_0}\le Ch^\tau\|u_0\|_{H^{1+s}(\Omega)}\\
&\|u_0-u_{0,h}\|_{L^2(\Omega)}\le Ch^{\tau+\upsilon}\|u_0\|_{H^{1+s}(\Omega)}.
\endaligned
\label{eq:estprimal}
\end{equation}
When also the other component of the eigenfunction is considered, we have the
following a priori estimate
\begin{equation}
\|u-u_h\|_U\le Ch^\tau\|u_0\|_{H^{1+s}(\Omega)}
\label{eq:estprimalfull}
\end{equation}
where the components $u_1$ and $u_{1,h}$ of $u$ and $u_h$ are the ones
corresponding to $u_0$ and $u_{0,h}$ in~\eqref{eq:DPGeig}
and~\eqref{eq:DPGeigh}, respectively.
\subsection{Ultra weak formulation}  
\label{se:UW}
The DPG ultra weak formulation for the Laplace eigenvalue problem fits within
our abstract setting with the following choices
(see~\cite{DPGPoisson,practical} for more details):
\[
\aligned
&U=U_0\times U_1\\
&U_0=L^2(\Omega)\\
&U_1=L^2(\Omega)^2\times H^{1/2}_0(\partial\Omega_h)\times H^{-1/2}(\partial\Omega_h)\\
&\H=U_0\\
&V=H^1(\Omega_h)\times\bfH(\div;\Omega_h)\\
&b(u,\bfsigma,\hat u,\hat\sigma_n;v,\bftau)=
(\bfsigma,\bftau)_{\Omega_h}-(u,\div\bftau)_{\Omega_h} 
+ \dual{\hat u}{\bftau\cdot\bfn}{\partial\Omega_h}\\
&\hspace{3cm}-(\bfsigma,\grad v)_{\Omega_h} +\dual{v}{\hat\sigma_n}{\partial\Omega_h}\\
&m(u;v,\bftau)=(u,v)_{\Omega_h},
\endaligned
\]
where the space $H^{1/2}_0(\partial\Omega_h)$ is defined as
\[
\left\{\hat w\in\bigotimes_K H^{1/2}(\partial K):
\hat w|_{\partial K}=w|_{\partial K}\text{ for some }w\in H^1_0(\Omega),\
\forall K\in\Omega_h\right\}
\]
and $H^1(\Omega_h)$ and $\bfH(\div;\Omega_h)$ denote broken functional
spaces on the mesh $\Omega_h$.

We choose the following discrete spaces with $k\geq 0$
\begin{subequations}
    \label{ultradiskret}
\begin{align}
   U_h &:= P_k(\Omega_h) \times P_k(\Omega_h;\mathbb R^2) \times S^{k+1}_0(\partial\Omega_h) 
   \times P_k(\partial\Omega_h)\\
   V_h &:= P_{k+2}(\Omega_h) \times P_{k+2}(\Omega_h;\mathbb R^2)
\end{align}
\end{subequations}
where $S^{k+1}_0(\partial \Omega_h)$ is defined as follows 
\begin{align*}
    &S^{k+1}_0(\partial \Omega_h) := \gamma_0 (S^{k+1}_0(\Omega_h)\cap H^1_0(\Omega)).
\end{align*}
Here $\gamma_0$ denotes the canonical trace operator from $H^1(\Omega)$ to
$H^{1/2}(\partial\Omega_h)$.

Also in this case we can use the a priori estimates in order to show the
uniform convergence~\eqref{eq:cunif}.

\begin{proposition}\label{ultra-apriori}

Let $\Tf:\H\to\H$ be the solution operator associated with the continuous
problem and $\Tfh$ its discrete counterpart as defined in~\eqref{eq:defT}
and~\eqref{eq:defTh}, respectively. Assume that the solution $u$ of the
Poisson problem with $f$ in $L^2(\Omega)$ belongs to $H^{1+s}(\Omega)$ for
some $s\in (1/2 ,k+1]$, where $k$ is the order of approximation used
in~\eqref{ultradiskret}.  Then the convergence in norm~\eqref{eq:cunif} holds true.

\end{proposition}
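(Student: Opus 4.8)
The plan is to follow the same reasoning as in the proof of Proposition~\ref{uniform}, exploiting the fact that here the pivot space is $\H=U_0=L^2(\Omega)$, so that the norm appearing in the uniform convergence bound~\eqref{eq:cunif} is exactly the $L^2(\Omega)$-norm of the volumetric component, which is one of the summands of the trial-space norm on $U$.

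First I would recall the quasi-optimal a priori estimate for the ultra weak DPG source problem established in~\cite{DPGPoisson,practical}. This estimate follows from the inf--sup and continuity conditions~\eqref{eq:inf-sup-continuity} together with the existence of a Fortin operator $\Pi$ satisfying~\eqref{eq:operatorPI} for the test space $V_h=P_{k+2}(\Omega_h)\times P_{k+2}(\Omega_h;\RE^2)$, and it bounds the full error $\|u-u_h\|_{L^2(\Omega)}+\|\bfsigma-\bfsigma_h\|_{L^2(\Omega)}+\|\hat u-\hat u_h\|_{H^{1/2}}+\|\hat\sigma_n-\hat\sigma_{n,h}\|_{H^{-1/2}}$ by the best approximation error of all four components in their respective norms. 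In particular it controls the volumetric error $\|u-u_h\|_{L^2(\Omega)}$.

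Second, I would estimate this best approximation error by standard polynomial approximation results together with the assumed regularity. Since $u\in H^{1+s}(\Omega)$ we have $\bfsigma=\grad u\in H^{s}(\Omega)^2$, while $\hat u$ and $\hat\sigma_n$ inherit the corresponding trace regularity. Approximating $u$ in $L^2$ by $P_k(\Omega_h)$ gives order $h^{\min\{1+s,k+1\}}$; approximating the flux $\bfsigma$ by $P_k(\Omega_h;\RE^2)$, and the skeleton unknowns by $S^{k+1}_0(\partial\Omega_h)$ and $P_k(\partial\Omega_h)$, gives order $h^{\min\{s,k+1\}}=h^{s}$, the flux being the limiting term. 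Combining this with the elliptic regularity estimate $\|u\|_{H^{1+s}(\Omega)}\le C\|f\|_{L^2(\Omega)}$ (and the ensuing bounds for the remaining components) yields
\[
\|(\Tf-\Tfh)f\|_\H=\|u-u_h\|_{L^2(\Omega)}\le C h^{s}\|f\|_{L^2(\Omega)},
\]
which is exactly~\eqref{eq:cunif} with $\rho(h)=Ch^{s}\to0$ as $h\to0$.

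The step needing most care, compared with the primal case, is the bookkeeping of the auxiliary variables: the ultra weak formulation introduces the flux and both skeleton unknowns as independent trial functions, and the limiting approximation rate is governed by $\bfsigma\in H^{s}$ rather than by $u$ itself. One therefore has to confirm that the trace spaces $H^{1/2}_0(\partial\Omega_h)$ and $H^{-1/2}(\partial\Omega_h)$ admit approximation of order $h^{s}$ by $S^{k+1}_0(\partial\Omega_h)$ and $P_k(\partial\Omega_h)$ respectively, using the trace characterizations recalled in the definition of the formulation. I do not expect genuine difficulty here: the condition $s>1/2$ guarantees well-defined traces, and the quasi-optimality spares us any direct manipulation of the trial-to-test operator, so the argument reduces to assembling known component-wise approximation estimates.
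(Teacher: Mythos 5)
Your argument is correct and follows essentially the same route as the paper: both deduce the uniform convergence~\eqref{eq:cunif} from an a priori error bound for the ultra weak source problem of the form $\|u-u_h\|_{U_0}\le Ch^s\|f\|_{L^2(\Omega)}$, using that $\H=U_0=L^2(\Omega)$. The only difference is that the paper obtains this bound by directly citing \cite[Cor.~6]{superconv-ultra}, whereas you reconstruct it from quasi-optimality, component-wise best approximation, and elliptic regularity --- a standard expansion of the same argument.
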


\begin{proof}

The a priori error analysis presented, for instance,
in~\cite[Cor.\ 6]{superconv-ultra} reads
\[
\|u-u_h\|_{U_0}\le C h^s\|f\|_{L^2(\Omega)}
\]
%
%From~\cite[Theorem 10]{superconv-ultra} with $s'\in(1/2,1]$ and from Lemma~\ref{le:thom} it
%follows that if $u_h\in U_{0,h}$ solves~\eqref{eq:DPGsourceh}, then it holds
%%
%\[
%\|u-u_h\|_{U_0}\le C h^{s'} h^s \|f\|_{L^2(\Omega)},
%\]
%%
which implies the uniform convergence~\eqref{eq:cunif}.
\end{proof}

The rate of convergence that follows naturally from the a priori error
estimate for the source problem is presented in the following theorem.

\begin{theorem}
Let us consider the DPG ultraweak approximation of the Laplace eigenvalue problem
as discussed in Proposition~\ref{ultra-apriori}. Then the conclusions of
Theorems~\ref{th:babuska_osborn} and~\ref{th:babuska_osborn2} hold true. In
particular, let $\lambda$ be a simple eigenvalue of the continuous problem
corresponding to an eigenspace $E$ belonging to $H^{1+s}(\Omega)$ and let
$\lambda_h$ be the approximation of $\lambda$ with discrete eigenspace $E_h$.
Then we have
\[
\aligned
&\gap(E,E_h)\le Ch^\tau\\
&|\lambda-\lambda_h|\le Ch^{2\tau}
\endaligned
\]
with $\tau=\min\{s,k+1\}$.
\label{th:th2}
\end{theorem}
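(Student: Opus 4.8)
The plan is to follow the same line of argument as in the proof of Theorem~\ref{th:th1}, taking advantage of the fact that for the ultra weak formulation the pivot space coincides with the volumetric trial component, $\H=U_0=L^2(\Omega)$. The conclusions of Theorem~\ref{th:babuska_osborn} are immediate, since the convergence in norm~\eqref{eq:cunif} has already been established in Proposition~\ref{ultra-apriori}: this rules out spurious modes and guarantees that exactly the right number of discrete eigenvalues approaches $\lambda$. It then remains to produce the rates by estimating the two quantities $\gamma_h=\|(\Tf-\Tfh)|_E\|_{\mathcal L(U_0)}$ and $\gamma^*_h=\|(\Tf^*-\Tfh^*)|_{E^*}\|_{\mathcal L(U_0')}$ and feeding them into Theorem~\ref{th:babuska_osborn2}.

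The bound for $\gamma_h$ is direct. Because $U_0=L^2(\Omega)$, the operator norm defining $\gamma_h$ is measured precisely in the norm controlled by the source a priori estimate recalled in Proposition~\ref{ultra-apriori}, namely $\|u-u_h\|_{U_0}\le Ch^s\|f\|_{L^2(\Omega)}$. Restricting $f$ to the finite dimensional eigenspace $E\subset H^{1+s}(\Omega)$ and noting that $\tau=\min\{s,k+1\}=s$ throughout the admissible range $s\in(1/2,k+1]$, we obtain $\gamma_h=O(h^\tau)$. Applying the first bound of Theorem~\ref{th:babuska_osborn2} gives $\gap(E,E_h)\le C\gamma_h\le Ch^\tau$.

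For $\gamma^*_h$ the decisive observation is that, with the $L^2$ pivot, the continuous solution operator $\Tf=(-\Delta)^{-1}$ of the Dirichlet Laplacian is self-adjoint on $\H=L^2(\Omega)$, so that $\Tf^*=\Tf$ and $E^*=E$; moreover $U_0'=L^2(\Omega)$ is self-dual. Hence $\gamma^*_h$ reduces to the convergence of the discrete adjoint operator $\Tfh^*$ on the same eigenspace $E$. As in the primal case, the continuous adjoint problem is governed by the same self-adjoint Laplace operator, so its solution inherits the regularity exponent $1+s$; the associated discrete scheme therefore satisfies an a priori estimate of the same form $\|u^*-u^*_h\|_{L^2(\Omega)}\le Ch^s\|g\|_{L^2(\Omega)}$, whence $\gamma^*_h=O(h^\tau)$. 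I expect this adjoint analysis to be the main obstacle: one must either make precise a discrete duality of the ultra weak DPG scheme, so that $\Tfh^*$ inherits the quasi-optimal bound enjoyed by $\Tfh$, or explicitly write down and discretize the adjoint ultra weak formulation along the lines carried out for the primal formulation in~\cite{DPGReduced}, checking that its left hand side coincides with that of the forward problem so that the quasi-optimality estimate transfers verbatim.

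Finally, combining the two bounds through the second estimate of Theorem~\ref{th:babuska_osborn2} and using that a simple eigenvalue of a self-adjoint operator has ascent multiplicity $\alpha=1$, we conclude $|\lambda-\lambda_h|\le C(\gamma_h\gamma^*_h)^{1/\alpha}\le Ch^{2\tau}$, which is the desired second estimate and completes the proof.
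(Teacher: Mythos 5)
Your proposal is correct and follows essentially the same route as the paper, which itself only sketches this proof by referring back to the argument of Theorem~\ref{th:th1}: uniform convergence from Proposition~\ref{ultra-apriori} yields the conclusions of Theorem~\ref{th:babuska_osborn}, the a priori source estimate gives $\gamma_h=O(h^\tau)$, and an inspection of the adjoint ultra weak problem (whose left-hand side coincides with that of the forward problem, so quasi-optimality transfers, and whose regularity is again governed by the Laplacian) gives $\gamma_h^*=O(h^\tau)$, whence Theorem~\ref{th:babuska_osborn2} concludes. The ``main obstacle'' you flag --- making the discrete adjoint estimate precise --- is exactly the step the paper also leaves implicit, citing the analogous treatment of the primal adjoint in~\cite{DPGReduced}.
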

\begin{proof}
We omit the technical detail of the proof that follows the same lines as the
proof of Theorem~\ref{th:th1}. In particular, the estimates are obtained by
inspecting the a priori estimates of the ultraweak formulation and of its
adjoint under our regularity assumptions.
The a priori estimates of the ultraweak formulation have been proved
in~\cite[Cor.~4.1]{DPGPoisson} and read as follows
\[
\aligned
\|u-u_h\|_{L^2(\Omega)}+\|\bfsigma-\bfsigma_h\|_{L^2(\Omega)}+
\|\hat u-\hat u_h\|_{H^{1/2}_0(\partial\Omega_h)}+
\|\hat\sigma_n-\hat\sigma_{n,h}\|_{H^{-1/2}(\partial\Omega_h)}\\
\le C
h^\tau(\|u\|_{H^{1+\tau}(\Omega)}+\|\bfsigma\|_{H^{1+\tau}(\Omega)})
\endaligned
\]
\end{proof}
It is also possible to introduce a slightly different lowest order
approximation for the ultra weak formulation. The discretization reads as
follows
\[
\aligned
&\tilde U_h= P_0(\Omega_h) \times P_0(\Omega_h;\RE^2)
\times S^1_0(\partial\Omega_h)\times P_0(\partial \Omega_h)\\
&\tilde V_h= P_1(\Omega_h)\times RT^{PW}_0(\Omega_h),
\endaligned
\]
where $RT^{PW}_0(\Omega_h)$ denotes the discontinuous Raviart--Thomas space of
lowest degree.

\begin{corollary}
    With the same assumptions as in Theorem~\ref{th:th2} 
    it holds for the lowest order case that
\[
\aligned
&\gap(E,E_h)\le Ch^\tau\\
&|\lambda-\lambda_h|\le Ch^{2\tau}
\endaligned
\]
\end{corollary}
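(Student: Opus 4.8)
The plan is to reproduce, step by step, the argument used for Theorem~\ref{th:th2}, the only structural change being the test space, which is now $\tilde V_h = P_1(\Omega_h) \times RT^{PW}_0(\Omega_h)$ in place of $P_2(\Omega_h) \times P_2(\Omega_h;\RE^2)$. Since the trial space $\tilde U_h$ coincides with the $k=0$ choice in~\eqref{ultradiskret}, its approximation properties and the regularity hypotheses are unchanged, so that $\tau = \min\{s,1\}$ and the target rates are exactly those of the lowest order case of Theorem~\ref{th:th2}. Concretely, I would (i) establish a quasi-optimal a priori estimate $\|u - u_h\|_{U_0} \le C h^s \|f\|_{L^2(\Omega)}$ for the source problem discretized with $(\tilde U_h, \tilde V_h)$, (ii) deduce the uniform convergence~\eqref{eq:cunif}, which is immediate because $\H = U_0 = L^2(\Omega)$, and then (iii) read off $\gamma_h = O(h^\tau)$ and $\gamma_h^* = O(h^\tau)$ and invoke Theorems~\ref{th:babuska_osborn} and~\ref{th:babuska_osborn2}.

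First I would reduce (i) to the well-posedness of the discrete mixed problem~\eqref{eq:DPGeigh} for the reduced pair, i.e.\ to a discrete inf-sup bound for $b$ on $\tilde U_h \times \tilde V_h$ that is uniform in $h$. As in the abstract setting, the cleanest route to such a bound is the construction of a Fortin operator $\Pi : V \to \tilde V_h$ satisfying~\eqref{eq:operatorPI}. Writing $v = (w,\bftau)$ and splitting the orthogonality $b(u_h, v - \Pi v) = 0$ according to the four trial components, one sees that the flux component $(\Pi v)_2$ must reproduce, on each element, the $P_0$-moments of $\bftau$, of $\div\bftau$ and of $\grad w$, and, on each edge, the pairing of $\bftau\cdot\bfn$ against the trace unknowns, while the scalar component must reproduce the edge-means tested against $P_0(\partial\Omega_h)$. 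Granting that such a $\Pi$ exists and is $V$-bounded, the standard DPG argument gives discrete stability and quasi-optimality, which combined with the approximation properties of $\tilde U_h$ yields the $O(h^s)$ bound, hence~\eqref{eq:cunif}.

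The main obstacle is precisely the construction of this Fortin operator (equivalently, the direct verification of the discrete inf-sup) for the \emph{thin} test space $\tilde V_h$. The difficulty is that a field in $RT^{PW}_0(\Omega_h)$ has a normal trace that is \emph{constant} on every edge, whereas the trace unknown $\hat u$ lives in the edge-affine space $S^1_0(\partial\Omega_h)$; consequently the edge pairing $\dual{\hat u_h}{(\bftau - (\Pi v)_2)\cdot\bfn}{\partial\Omega_h}$ cannot be annihilated termwise by a naive choice of $(\Pi v)_2$, and one must exploit the coupling between the scalar and flux components of $\tilde V_h$ together with the commuting-projection structure of $RT_0$ (which already contains $P_0(\Omega_h;\RE^2)$). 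I would therefore either construct $\Pi$ explicitly from the $RT_0$ interpolant corrected by the low-order data carried by $P_1(\Omega_h)$, or, in this lowest order case where the local spaces are small, verify the discrete inf-sup directly on a reference element by an explicit computation.

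Once stability and the source estimate are in hand, step (iii) is verbatim as in the proof of Theorem~\ref{th:th2}: restricting the source estimate to the eigenspace $E$ gives $\gamma_h = O(h^\tau)$, and since the adjoint of the ultra weak formulation has the same left hand side and its dual problem again involves the Laplace operator with the same Sobolev exponent $1+s$, the adjoint discretization inherits the identical quasi-optimality, giving $\gamma_h^* = O(h^\tau)$. Theorem~\ref{th:babuska_osborn} then rules out spurious modes, $\gap(E,E_h) \le C\gamma_h = O(h^\tau)$ follows from the first estimate of Theorem~\ref{th:babuska_osborn2}, and the product $\gamma_h \gamma_h^* = O(h^{2\tau})$ gives $|\lambda - \lambda_h| \le C h^{2\tau}$, the ascent multiplicity being one for the simple eigenvalue considered.
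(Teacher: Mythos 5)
Your overall architecture is exactly the paper's: establish a quasi-optimal a priori estimate for the source problem with the reduced pair $(\tilde U_h,\tilde V_h)$, deduce the norm convergence~\eqref{eq:cunif}, and then run the Bab\v uska--Osborn machinery of Theorems~\ref{th:babuska_osborn} and~\ref{th:babuska_osborn2} with $\gamma_h=O(h^\tau)$ and $\gamma_h^*=O(h^\tau)$, the adjoint inheriting the same regularity because it again involves the Laplacian. Your observations that $\tilde U_h$ coincides with the $k=0$ trial space of~\eqref{ultradiskret} and that $\tau=\min\{s,1\}$ are also correct.

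The genuine gap is in your step (i). The entire difficulty of the lowest-order reduced method is concentrated in the discrete stability of $b$ on $\tilde U_h\times\tilde V_h$, i.e.\ in the existence of a bounded Fortin operator $\Pi:V\to\tilde V_h$ satisfying~\eqref{eq:operatorPI} --- and you correctly diagnose why the naive construction fails (the normal trace of a field in $RT^{PW}_0(\Omega_h)$ is constant on each edge, while $\hat u_h$ ranges over the edge-affine space $S^1_0(\partial\Omega_h)$, so the edge pairing cannot be killed termwise), but you then leave the construction as ``I would either\dots or\dots''. As written, the proof is incomplete at precisely the step that carries all the content; everything downstream of it is routine. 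The paper does not construct this operator either: it simply invokes the a priori estimate of \cite[Theorem~3.3]{loworderultraweak}, which is exactly the result your unfinished step would reprove. So you should either cite that result, in which case your argument collapses to the paper's one-line proof, or actually carry out the Fortin construction (or the reference-element inf-sup verification), which is a nontrivial piece of analysis in its own right and cannot be waved through in a corollary.
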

\begin{proof}
As in the case of Theorems~\ref{th:th1} and~\ref{th:th2}, the result is
obtained by inspecting the a priori estimates of the ultraweak formulation and
of its adjoint under our regularity assumptions.
The a priori estimates for this choice of discrete spaces can be found
in~\cite[Theorem 3.3]{loworderultraweak}.
\end{proof}

For reasons that will become clearer in the next section, it will be useful to
have a higher-order estimate of the $U_0$ component of the solution in the
spirit of what we obtained in~\eqref{eq:estprimal} for the primal formulation.
This property can be achieved by augmenting the approximating space $U_{0,h}$
along the lines of what was proposed in~\cite{superconv-ultra}.

We then choose the following discrete spaces with $k\geq 0$
\begin{align*}
   U_h &:= P_{k+1}(\Omega_h) \times P_k(\Omega_h;\mathbb R^2) \times S^{k+1}_0(\partial\Omega_h)
   \times P_k(\partial\Omega_h)\\
   V_h &:= P_{k+2}(\Omega_h) \times P_{k+2}(\Omega_h;\mathbb R^2)
\end{align*}
where the order of the polynomials approximating $U_0$ is raised from $k$ to
$k+1$.

We now revert back to the notation of the abstract setting where the symbols
$u$ and $u_h$ refer to pairs $(u_0,u_1)$ and $(u_{0,h},u_{1,h})$ in $U$ and
$U_h$, respectively.

We use the improved a priori estimates obtained in~\cite[Thm.\ 10]{superconv-ultra}, which
reads 
\begin{align*}
\Vert u_0 - u_{0,h} \Vert_{U_0} \leq C h^{s + s'} \Vert f \Vert_{L^2(\Omega)},
\end{align*}
where $s' \in (1/2,1]$ denotes the regularity shift of an auxiliary problem
used for a duality argument. 
This implies now the following result.

\begin{theorem}

With the same assumptions as in Theorem~\ref{th:th2}, the augmented
formulation provides the following a priori error estimates. Given $u_0\in E$
and its corresponding $u=(u_0,u_1)\in U$, there exists $u_{0,h}\in E_h$ and
its corresponding $u_h=(u_{0,h},u_{1,h})\in U_h$ such that
\[
\aligned
&\|u-u_h\|_U\le Ch^\tau\|u_0\|_{H^{1+s}(\Omega)}\\
&\|u_0-u_{0,h}\|_{U_0}\le Ch^{\tau+s'}\|u_0\|_{H^{1+s}(\Omega)}
\endaligned
\]
with $\tau=\min\{s,k+1\}$ and $s'\in (1/2,1]$ is the
regularity shift defined in~\cite[Eq.\ (14)]{superconv-ultra}.
\label{th:UW+}
\end{theorem}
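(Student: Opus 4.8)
The plan is to follow the same route as in the proofs of Theorems~\ref{th:th1} and~\ref{th:th2}, deducing both estimates from the Bab\v uska--Osborn theory of Theorems~\ref{th:babuska_osborn} and~\ref{th:babuska_osborn2} combined with the two source-problem bounds available for the augmented formulation: the standard full-norm bound $\|u-u_h\|_U\le Ch^s\|f\|_{L^2(\Omega)}$ and the improved one $\|u_0-u_{0,h}\|_{U_0}\le Ch^{s+s'}\|f\|_{L^2(\Omega)}$ quoted from~\cite[Thm.~10]{superconv-ultra}. Since augmenting $U_{0,h}$ does not affect the uniform convergence~\eqref{eq:cunif} (already granted by Proposition~\ref{ultra-apriori}), Theorems~\ref{th:babuska_osborn} and~\ref{th:babuska_osborn2} apply verbatim, and only the rate $\gamma_h$ entering the eigenfunction gap estimate must be recomputed for the new spaces. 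Note also that for the two claimed bounds only $\gamma_h$ is needed; the adjoint quantity $\gamma_h^*$ enters only the eigenvalue estimate, which is already provided by Theorem~\ref{th:th2}.

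For the second (improved) estimate I would exploit that here $\H=U_0=L^2(\Omega)$, so that $\gamma_h=\|(\Tf-\Tfh)|_E\|_{\mathcal L(U_0)}$ is measured directly in the $L^2$ norm. For $f\in E$ the solution $\Tf f$ equals, up to the factor $1/\lambda$, the eigenfunction itself and hence lies in $H^{1+s}(\Omega)$; the improved source bound then gives $\|(\Tf-\Tfh)f\|_{U_0}\le Ch^{\tau+s'}\|f\|_{L^2(\Omega)}$, with the exponent reflecting the approximation order so that the relevant power is $\tau=\min\{s,k+1\}$. Thus $\gamma_h=O(h^{\tau+s'})$, and Theorem~\ref{th:babuska_osborn2} yields $\gap(E,E_h)\le C\gamma_h\le Ch^{\tau+s'}$ in the $U_0$ norm, which is exactly the bound on $\|u_0-u_{0,h}\|_{U_0}$. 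In contrast with the primal case of Proposition~\ref{pr:pr1}, no separate Aubin--Nitsche step is required, precisely because the pivot space is already $L^2(\Omega)$ and the superconvergent estimate is built in.

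For the first estimate, controlling the \emph{full} $U$-norm, the gap theory alone does not suffice, since it governs only the volumetric component $u_0\in\H$. The idea is to reconstruct the complete eigenfunctions through the source operator: writing $\widehat{T}_F$ and $\widehat{T}_{F,h}$ for the maps sending data $f$ to the full solutions $(u_0,u_1)$ of~\eqref{eq:defT} and~\eqref{eq:defTh}, the eigenrelations give $u=\widehat{T}_F(\lambda u_0)$ and $u_h=\widehat{T}_{F,h}(\lambda_h u_{0,h})$. I would then split
\[
u-u_h=(\widehat{T}_F-\widehat{T}_{F,h})(\lambda u_0)+\widehat{T}_{F,h}(\lambda u_0-\lambda_h u_{0,h}).
\]
The first term is bounded through the standard full-norm source estimate by $Ch^\tau\|u_0\|_{H^{1+s}(\Omega)}$, since $\lambda u_0$ is as regular as the eigenfunction. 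The second term is controlled by the uniform discrete stability of the DPG source solver, guaranteed by~\eqref{eq:operatorPI}, times $\|\lambda u_0-\lambda_h u_{0,h}\|_{L^2(\Omega)}$; splitting the latter as $|\lambda|\,\|u_0-u_{0,h}\|_{L^2(\Omega)}+|\lambda-\lambda_h|\,\|u_{0,h}\|_{L^2(\Omega)}$ and using the eigenfunction rate just proved together with the eigenvalue rate $|\lambda-\lambda_h|\le Ch^{2\tau}$ of Theorem~\ref{th:th2} shows it is itself $O(h^\tau)$. Collecting the two contributions yields the first estimate.

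The main obstacle is this reconstruction step: one must make precise that the full discrete eigenfunction $u_h$ is indeed the discrete source solution with data $\lambda_h u_{0,h}$, and that the discrete solver is uniformly stable with respect to the $L^2$ norm of the data. Both facts are standard for the DPG method but should be stated carefully; once they are in place, the triangle-inequality argument above is routine.
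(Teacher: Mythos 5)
Your proposal is correct and follows essentially the same route as the paper, which states this theorem without a separate proof: the improved $L^2$ bound of \cite[Thm.~10]{superconv-ultra} feeds directly into the Bab\v uska--Osborn machinery (with $\H=U_0=L^2(\Omega)$, so no extra Aubin--Nitsche step is needed), exactly as you argue. Your reconstruction of the full $U$-norm estimate via $u=\widehat{T}_F(\lambda u_0)$ and discrete stability is a careful filling-in of a step the paper leaves implicit (cf.\ the unproved passage to~\eqref{eq:estprimalfull} in the primal case), and it is sound.
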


\section{A posteriori error analysis}
\label{se:post}

In this section we present and discuss two error estimators that can be used
in the framework of a posteriori analysis and adaptive schemes.

\subsection{The natural error estimator}

We start with the most natural error estimator, associated with the energy
residual, that has been considered in~\cite{cdg} for the source problem. The
residual is the component $\varepsilon_h$ of the solution to
problem~\eqref{eq:DPGmixedh}. In the setting of~\cite{cdg} we can consider two
operators $B,M:U\to V'$ defined as
\[
\aligned
&(Bu)(v) := b(u,v)&&\forall u\in U,\ \forall v\in V \\
&(Mu)(v) := m(u_0,v)&&\forall u\in U,\ \forall v\in V,
\endaligned
\]
where we are adopting the splitting $u=(u_0,u_1)\in U=U_0\times U_1$ as
considered in Section~\ref{se:pbsetting}.
We denote the operator norm as $C_B$ and $C_M$.
The natural error indicator studied theoretically in~\cite{cdg} is the
\emph{global} indicator
\begin{equation}
\eta=\|\varepsilon_h\| = \Vert \lambda_h Mu_h -Bu_h\Vert_{V'}
\label{eq:eta1}
\end{equation}
(see~\eqref{eq:DPGmixedh}), where the definition of $Mu_h$ makes use of the
component $u_{0,h}$ of $u_h$; the practical implementation of an adaptive
scheme based of $\eta$ employs a localized version of it.

With natural modifications of the analysis of~\cite{cdg} global efficiency and
reliability can be proved. Both properties rely on the following (usual)
higher order term $\lambda u_0-\lambda_h u_{0,h}$ (see Section~\ref{se:HOT}).

\begin{proposition}[Reliability and Efficiency]\label{r&r}
Let us examine an eigenvalue $\lambda$ of problem~\eqref{eq:DPGeig} of
multiplicity one with eigenfunction $u\in U$ and the corresponding discrete
eigenpair $(\lambda_h,u_h)$.
Assume that~\eqref{eq:uniqueness}, \eqref{eq:inf-sup-continuity}
and~\eqref{eq:operatorPI} are satisfied. Then the following reliability and
efficiency estimates hold true
\[
\aligned
&C_1\Vert u-u_h\Vert_U\le\sqrt{\eta^2+(\Vert\lambda(Mu)\circ(1-\Pi)\Vert_{V^*}
+\eta\Vert\Pi\Vert)^2}+C_M\Vert\lambda u_0-\lambda_h u_{0,h}\Vert\\
&\eta^2\le C_B^2\Vert u-u_h\Vert^2_U+C_M^2\Vert\lambda u_0-\lambda_h u_{0,h}\Vert^2.
\endaligned
\]
\end{proposition}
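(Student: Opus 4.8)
The plan is to adapt the source-problem analysis of~\cite{cdg} to the eigenvalue setting, treating the eigenvalue residual $\lambda_h M u_h - B u_h$ as if it were the source-problem residual, with the extra complication that the "right-hand side" $\lambda M u_0$ is itself only approximated by $\lambda_h M u_{0,h}$. The starting point is the observation that, by the mixed characterization~\eqref{eq:DPGmixedh}, the component $\varepsilon_h$ is exactly the Riesz representative in $V$ of the functional $v_h \mapsto \ell(v_h) - b(u_h, v_h)$ restricted to $V_h$; in the eigenvalue case $\ell(v_h)$ is replaced by $\lambda_h m(u_{0,h}, v_h) = (\lambda_h M u_h)(v_h)$, so that $\eta = \|\varepsilon_h\|_V = \|\lambda_h M u_h - B u_h\|_{V'}$ as claimed in~\eqref{eq:eta1}.

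For \emph{efficiency} (the second estimate, which I expect to be the easier half) I would proceed directly: write
\begin{equation*}
\lambda_h M u_h - B u_h = (\lambda_h M u_h - \lambda M u) + (B u - B u_h),
\end{equation*}
using $B u = \lambda M u$ from the continuous problem~\eqref{eq:DPGeig}. The second term is bounded by $C_B \|u - u_h\|_U$ by definition of the operator norm $C_B$, and the first term is $\lambda M u - \lambda_h M u_{0,h}$ acting through $M$, which I would bound by $C_M \|\lambda u_0 - \lambda_h u_{0,h}\|$. Taking norms in $V'$, squaring, and using $(a+b)^2 \le 2a^2 + 2b^2$-type manipulations (or the precise bookkeeping of~\cite{cdg}) yields $\eta^2 \le C_B^2 \|u - u_h\|_U^2 + C_M^2 \|\lambda u_0 - \lambda_h u_{0,h}\|^2$.

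For \emph{reliability} (the first estimate) I would start from the inf--sup condition~\eqref{eq:inf-sup-continuity}, which gives
\begin{equation*}
C_1 \|u - u_h\|_U \le \sup_{v \in V \setminus\{0\}} \frac{|b(u - u_h, v)|}{\|v\|_V}.
\end{equation*}
The key device, following~\cite{cdg}, is to split an arbitrary test function as $v = \Pi v + (v - \Pi v)$ and exploit the Fortin-type operator~\eqref{eq:operatorPI}. On the range of $\Pi$ the residual is controlled by $\eta$ together with $\|\Pi\|$ and by the consistency error $\|\lambda (Mu)\circ(1-\Pi)\|_{V^*}$ coming from the fact that $Bu = \lambda Mu$ only holds against all of $V$, not just $V_h$; on the complementary part $v - \Pi v$ the property $b(u_h, v - \Pi v) = 0$ removes the discrete contribution and leaves $b(u, v-\Pi v) = \lambda m(u_0, v - \Pi v) = \lambda(Mu)((1-\Pi)v)$, which is exactly the term $\|\lambda(Mu)\circ(1-\Pi)\|_{V^*}$. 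Assembling the two pieces under a single supremum produces the $\sqrt{\eta^2 + (\cdots)^2}$ structure, and the remaining data-oscillation term $C_M \|\lambda u_0 - \lambda_h u_{0,h}\|$ appears because one must replace $\lambda_h M u_{0,h}$ (the actual discrete right-hand side encoded in $\varepsilon_h$) by $\lambda M u_0$. The main obstacle will be the careful orchestration of this split so that the higher-order eigenvalue term $\lambda u_0 - \lambda_h u_{0,h}$ is cleanly separated from the genuinely estimator-controlled terms; this is precisely the point where the eigenvalue analysis departs from the source-problem argument of~\cite{cdg}, and where I would invoke that $\|\lambda u_0 - \lambda_h u_{0,h}\|$ is of higher order (as recorded for Section~\ref{se:HOT}) to conclude that the estimator is asymptotically reliable and efficient.
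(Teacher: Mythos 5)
Your overall strategy is the right one and your efficiency argument is essentially the paper's: both start from $Bu=\lambda Mu$, insert $\pm\lambda Mu$ into the residual $\lambda_h Mu_h-Bu_h$, and bound the two pieces by $C_B\|u-u_h\|_U$ and $C_M\|\lambda u_0-\lambda_h u_{0,h}\|$. The opening step of your reliability argument (the inf--sup condition~\eqref{eq:inf-sup-continuity} plus the replacement of $\lambda_h Mu_{0,h}$ by $\lambda Mu_0$ at the cost of the $C_M$-term) also matches the paper.

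There is, however, a genuine gap in how you propose to obtain the $\sqrt{\eta^2+(\cdots)^2}$ structure. You plan to split the test function in the inf--sup supremum as $v=\Pi v+(v-\Pi v)$ and "assemble the two pieces under a single supremum''. That split is not how the Pythagorean form arises, and it cannot produce it: the ranges of $\Pi$ and $1-\Pi$ are not orthogonal in $V$, so a direct test-function split only yields the weaker additive bound of the type $\eta(1+\|\Pi\|)+\|\lambda(Mu)\circ(1-\Pi)\|_{V'}+C_M\|\lambda u_0-\lambda_h u_{0,h}\|$, not the sharper estimate claimed in the proposition. The missing device is the \emph{full} error representation function $\varepsilon\in V$, defined by $(\varepsilon,v)_V=\lambda_h m(u_{0,h},v)-b(u_h,v)$ for all $v\in V$ (Riesz representative over all of $V$, not just $V_h$), together with the Galerkin orthogonality $\delta:=\varepsilon-\varepsilon_h\perp V_h$. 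This gives the exact Pythagoras identity $\|\varepsilon\|_V^2=\eta^2+\|\delta\|_V^2$, whence the square root; the Fortin operator~\eqref{eq:operatorPI} is then used not to split the supremum but to bound $\|\delta\|_V$, by testing $\delta$ against $\delta-\Pi\delta$ and using $\langle\varepsilon_h,\delta\rangle_V=0$ to arrive at $\|\delta\|_V\le\|\lambda(Mu)\circ(1-\Pi)\|_{V'}+\eta\,\|\Pi\|$. Without introducing $\varepsilon$ and exploiting $\delta\perp V_h$, your plan proves a correct but strictly weaker reliability estimate and does not prove the stated one. (The same identity is also what the paper reuses for efficiency, via $\eta^2=\|\varepsilon\|_V^2-\|\delta\|_V^2\le\|\varepsilon\|_V^2$, although your direct bound $\eta\le\|\lambda_h Mu_h-Bu_h\|_{V'}$ suffices there.)
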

\begin{proof}
We define the error $e:=u-u_h$, the error representation function
$\varepsilon$ by
\begin{equation}
\dual{\varepsilon}{y}{V}=\lambda_h m(u_{0,h},v)-b(u_h,v)\quad\forall v\in V,
\label{eq:error-representationI}
\end{equation}
and its approximation $\varepsilon_h$ by
\begin{equation}
\dual{\varepsilon_h}{y_h}{V}=\lambda_h m(u_{0,h},v_h)-b(u_h,v_h)
\quad\forall v_h\in V_h.
\label{eq:error-representationII}
\end{equation}
From the inf-sup condition \eqref{eq:inf-sup-continuity} it follows
\[
C_1\Vert e\Vert_U\le\Vert Be\Vert_{V^*}\le
\Vert \varepsilon \Vert_V + \Vert M(\lambda u - \lambda_h u_h) \Vert_{V'} \le
\Vert\varepsilon\Vert_V+C_M\Vert\lambda u_0-\lambda_h u_{0,h}\Vert.
\]
From~\eqref{eq:error-representationI} and~\eqref{eq:error-representationII} we
see that
\[
\delta := \varepsilon - \varepsilon_h \perp V_h,
\]
so that Pythagoras theorem gives
\begin{equation}
\Vert \varepsilon \Vert^2_V = \Vert \varepsilon_h\Vert_V^2 + \Vert\delta\Vert^2_V.
\label{decomposition}
\end{equation}
Noting that $\Pi\delta\in V_h\perp\delta$ and from~\eqref{eq:operatorPI} we
can conclude that
\[
\aligned
\Vert\delta\Vert_V^2 &= \dual{\delta}{\delta-\Pi\delta}{V}=
\dual{\varepsilon-\varepsilon_h}{\delta-\Pi\delta}{V}\\ 
&=\dual{\varepsilon}{\delta-\Pi\delta}{V}+\dual{\varepsilon_h}{\Pi\delta}{V}.
\endaligned
\]
The properties of $\Pi$ lead to 
\[
\dual{\varepsilon}{\delta-\Pi\delta}{V}=b(u-u_h,\delta-\Pi\delta)
=\lambda(Mu)(\delta-\Pi\delta).
\]
From the previous estimate it follows
\[
\aligned
\Vert\delta\Vert_V^2&=
\lambda(Mu)(\delta-\Pi\delta)+\dual{\varepsilon_h}{\Pi\delta}{V}\\
&\le(\Vert\lambda(Mu)\circ(1-\Pi)\Vert_{V'}
+\Vert\varepsilon_h\Vert_V\Vert\Pi\Vert)\Vert\delta\Vert_V.
\endaligned
\]
Finally 
\[
C_1\|e\|_U\le\sqrt{\|\varepsilon_h\|^2_V+(\|\lambda(Mu)\circ(1-\Pi)\|_{V'}+
\|\varepsilon_h\|_V\|\Pi\|)^2}+C_M\|\lambda u_0-\lambda_h u_{0,h}\|.
\]
For the efficiency, we can use the decomposition~\eqref{decomposition}
and~\eqref{eq:inf-sup-continuity}, so that we obtain easily
\[
\aligned
\|\varepsilon_h\|_V^2 &=\|\varepsilon\|_V^2-\|\delta\|_V^2\\
&\le\|Be\|_{V'}^2+C_M^2\|\lambda u_0-\lambda_hu_{0,h}\|^2\\
&\le C_B^2\|e\|_U^2+C_M^2\|\lambda u_0-\lambda_hu_{0,h}\|^2.
\endaligned
\]
\end{proof}

\begin{corollary}
Proposition~\ref{r&r} holds true for the primal and ultra weak formulation 
of the Laplace eigenvalue problem that we have discussed in the previous
section.
\end{corollary}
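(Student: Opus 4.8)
The plan is to observe that Proposition~\ref{r&r} is purely abstract: its proof uses only the uniqueness assumption~\eqref{eq:uniqueness}, the inf--sup bound~\eqref{eq:inf-sup-continuity}, the Fortin property~\eqref{eq:operatorPI}, and the boundedness of the operators $B$ and $M$ encoded in the constants $C_B$ and $C_M$. Hence, to establish the corollary it suffices to check that each of the two formulations introduced in Section~\ref{se:laplace} fits this abstract framework, i.e.\ satisfies these hypotheses with finite constants. Once this is done, the reliability and efficiency estimates follow verbatim from the argument already given, for a simple eigenvalue as in the hypothesis of the proposition.

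First I would record the well-posedness of the two source problems. For the primal formulation the bilinear form $b(u,\hat\sigma_n;v)$ is bounded on $U\times V$ (so that $C_B<\infty$), the form $m$ is the broken $L^2$ product and is trivially bounded (so that $C_M<\infty$), and the bijectivity established in~\cite{DPGPrimal} supplies both the injectivity~\eqref{eq:uniqueness} and the inf--sup bound~\eqref{eq:inf-sup-continuity}. The same holds for the ultra weak formulation, where boundedness of $b$ and $m$ is immediate from their definitions and the well-posedness proved in~\cite{DPGPoisson,practical} yields~\eqref{eq:uniqueness} and~\eqref{eq:inf-sup-continuity}.

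The remaining, and genuinely substantive, point is the Fortin property~\eqref{eq:operatorPI}: one needs a bounded operator $\Pi:V\to V_h$ satisfying $b(u_h,v-\Pi v)=0$ for all $u_h\in U_h$ together with $\|\Pi v\|_V\le C_\Pi\|v\|_V$, where $C_\Pi$ is independent of the mesh size $h$. I expect this to be the crux, since it is exactly the ingredient that guarantees the stability of the practical DPG method. For the primal discretization with test space $V_h=P_{k+1}(\Omega_h)$ such an operator is constructed in~\cite{practical,DPGPrimal}, and for the ultra weak discretization with $V_h=P_{k+2}(\Omega_h)\times P_{k+2}(\Omega_h;\mathbb{R}^2)$ the corresponding construction is given in~\cite{DPGPoisson,practical}; in both cases the construction yields a mesh-independent bound $C_\Pi$. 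With~\eqref{eq:uniqueness}, \eqref{eq:inf-sup-continuity} and~\eqref{eq:operatorPI} verified and $C_B,C_M<\infty$ in hand, invoking Proposition~\ref{r&r} concludes the proof.
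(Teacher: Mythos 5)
Your proposal is correct and follows essentially the same route as the paper: the paper's proof simply observes that the hypotheses of Proposition~\ref{r&r} (uniqueness, inf--sup, Fortin operator, bounded $B$ and $M$) are classical for the primal and ultra weak DPG formulations of the Laplace problem and cites the literature where they are verified. You spell out the checklist in more detail and correctly identify the Fortin operator as the substantive ingredient, but the argument is the same citation-based verification.
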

\begin{proof}
The hypotheses stated in Proposition~\ref{r&r} are classical in the setting of
the DPG approximation of the Laplace problem; see, for instance~\cite[Section
3.2]{loworderultraweak} and~\cite[Section 3.1]{cdg}.
\end{proof}

\begin{remark}
The occurrence of a nonlinear term like $\lambda u_0-\lambda_hu_{0,h}$ is typical
when translating a posteriori analysis from the source to the eigenvalue
problem. Although this term is not suited for the standard AFEM setting, it is
generally a higher order term. We will comment more on this fact in
Section~\ref{se:HOT}.
\label{re:HOT}
\end{remark}

\subsection{An alternative error estimator}

In the case of lowest order approximations, we now present an error estimator
which depends only on the jump terms and that will turn out to be equivalent
to the natural error estimator $\eta$. Therefore for this we use similar 
arguments as in \cite{Hellwig2019Adaptive} for the source problem. The proof
relies on special properties of the Crouzeix--Raviart spaces,
which are defined as follows: 
\[
\aligned
&CR^1(\Omega_h):=\{v\in P_1(\Omega_h)|
v\text{ is continuous at }\mathrm{mid}(E)\quad\forall E\in\mathcal{E}(\Omega_h)\}\\
&CR^1_0(\Omega_h):=\{v\in CR^1(\Omega_h)|
v(\mathrm{mid}(E))=0\quad\forall E\in\mathcal{E}(\partial\Omega_h)\},
\endaligned
\]
where $\mathcal{E}(\Omega_h)$ and $\mathcal{E}(\partial\Omega_h)$ denote the sets
of interior and boundary edges of the triangulation, respectively.

This lemma from \cite[Lemma 3.2]{Hellwig2019Adaptive} shows a general orthogonality relationship between the
Crouzeix--Raviart spaces and continuous $P_1$ spaces and is crucial for the
equivalent statements.
\begin{lemma}\label{error_control_residual}
Any $w_{CR}\in CR^1_0(\Omega_h)$ with the $L^2$ orthogonality
$\grad_h w_{CR}\perp\grad S^1_0(\Omega_h)$ satisfies 
\[
\aligned
\vertiii{w_{CR}}^2_{pw}&\approx\sum_{T\in\Omega_h}\vert T\vert^{1/2} 
\sum_{E\in\mathcal E(T)}\Vert[\grad_h w_{CR}]_E\cdot\nu_E\Vert^2_{L^2(E)}\\
&\approx \sum_{T \in\Omega_h}\vert T\vert^{1/2} 
\sum_{E\in \mathcal E(T)}\Vert[\grad_h w_{CR}]_E\cdot\tau_E\Vert^2_{L^2(E)},
\endaligned
\]
where $\grad_h$ denotes the broken gradient.

\end{lemma}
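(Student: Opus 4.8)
My plan is to prove the two displayed equivalences in two stages: first I would show that the broken energy norm is equivalent to the \emph{tangential} jump sum, using the orthogonality hypothesis in an essential way, and then transfer this to the \emph{normal} jump sum by a rotation (conjugation) argument. I assume throughout that $\Omega_h$ is shape-regular, so that $|T|^{1/2}\approx h_E\approx h_T$ whenever $E\in\mathcal E(T)$, and I use repeatedly that $\grad_h w_{CR}$ is piecewise constant, so each of $[\grad_h w_{CR}]_E\cdot\nu_E$ and $[\grad_h w_{CR}]_E\cdot\tau_E$ is a constant on $E$. The key local identity for Crouzeix--Raviart functions is that on an edge $E$ the jump $[w_{CR}]_E$ is affine and vanishes at $\mathrm{mid}(E)$; hence $\partial_{\tau_E}[w_{CR}]_E=[\grad_h w_{CR}]_E\cdot\tau_E$ and, by a one-dimensional computation, $h_E^{-1}\|[w_{CR}]_E\|_{L^2(E)}^2\approx|T|^{1/2}\|[\grad_h w_{CR}]_E\cdot\tau_E\|_{L^2(E)}^2$.

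One inequality is elementary: since the jumps are constant, $\|[\grad_h w_{CR}]_E\cdot\nu_E\|_{L^2(E)}^2\approx h_E\,|[\grad_h w_{CR}]_E\cdot\nu_E|^2$ is bounded by $h_E$ times the squared gradient magnitudes on the two triangles sharing $E$; multiplying by $|T|^{1/2}\approx h_T$ and summing over the bounded number of edges per element shows that both jump sums are $\lesssim\vertiii{w_{CR}}_{pw}^2$. For the converse I would invoke a conforming companion (averaging) operator $J\colon CR^1_0(\Omega_h)\to S^1_0(\Omega_h)$ with the standard estimate $\vertiii{w_{CR}-Jw_{CR}}_{pw}^2\lesssim\sum_{E\in\mathcal E}h_E^{-1}\|[w_{CR}]_E\|_{L^2(E)}^2$, which by the identity above equals (up to constants) the tangential jump sum. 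The hypothesis $\grad_h w_{CR}\perp\grad S^1_0(\Omega_h)$ enters precisely here: since $Jw_{CR}\in S^1_0(\Omega_h)$ we have $(\grad_h w_{CR},\grad Jw_{CR})_{L^2}=0$, whence $\vertiii{w_{CR}}_{pw}^2=(\grad_h w_{CR},\grad_h(w_{CR}-Jw_{CR}))_{L^2}\le\vertiii{w_{CR}}_{pw}\vertiii{w_{CR}-Jw_{CR}}_{pw}$. Cancelling one factor and squaring yields $\vertiii{w_{CR}}_{pw}^2\lesssim$ tangential jump sum, which together with the elementary bound proves the equivalence of $\vertiii{w_{CR}}_{pw}^2$ with the tangential jump sum.

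To reach the normal jump sum I would introduce the conjugate field $q^\perp$, the pointwise $\pi/2$-rotation of $q=\grad_h w_{CR}$, and argue that on a simply connected domain the orthogonality hypothesis is exactly the discrete divergence-free (solvability) condition guaranteeing the existence of a conjugate $\tilde w\in CR^1(\Omega_h)$ with $\grad_h\tilde w=q^\perp$. The rotation preserves magnitudes, so $\vertiii{\tilde w}_{pw}=\vertiii{w_{CR}}_{pw}$, and it interchanges components, so the tangential jumps of $\tilde w$ coincide up to sign with the normal jumps of $w_{CR}$, while $\tilde w$ inherits a discrete harmonicity condition of its own. Applying the first equivalence to $\tilde w$ then gives $\vertiii{w_{CR}}_{pw}^2=\vertiii{\tilde w}_{pw}^2\approx$ (tangential jump sum of $\tilde w$) $=$ (normal jump sum of $w_{CR}$), which is the remaining equivalence.

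The step I expect to be the main obstacle is this conjugation: it needs the domain to be simply connected and, more delicately, requires identifying the correct space and boundary conditions for $\tilde w$ so that the companion-operator estimate and the orthogonality argument of the second paragraph apply to it verbatim (the Dirichlet condition on $w_{CR}$ becoming a Neumann-type condition on $\tilde w$, with $\grad S^1_0(\Omega_h)$ replaced by $\grad S^1(\Omega_h)$ modulo constants). Verifying that the discrete harmonicity of $w_{CR}$ really transfers to $\tilde w$, and that no boundary contributions spoil the orthogonality identity, is the technical heart; the remaining weight bookkeeping ($|T|^{1/2}\approx h_E$ and the bounded edge--element incidence) is routine under shape-regularity.
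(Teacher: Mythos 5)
The paper does not actually prove this lemma --- it is imported verbatim from \cite[Lemma 3.2]{Hellwig2019Adaptive} --- so your attempt can only be measured against the standard argument from that source, which it essentially reproduces, and it is correct. The tangential equivalence via a conforming companion $J\colon CR^1_0(\Omega_h)\to S^1_0(\Omega_h)$, the midpoint identity $h_E^{-1}\Vert[w_{CR}]_E\Vert^2_{L^2(E)}\approx\vert T\vert^{1/2}\Vert[\grad_h w_{CR}]_E\cdot\tau_E\Vert^2_{L^2(E)}$, and the Cauchy--Schwarz step exploiting $(\grad_h w_{CR},\grad Jw_{CR})_{L^2(\Omega)}=0$ are all sound, as is the elementary upper bound of both jump sums by $\vertiii{w_{CR}}^2_{pw}$ under shape regularity. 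The step you flag as the main obstacle does go through, and it is worth recording why: on a simply connected $\Omega$ one has the two $L^2$-orthogonal discrete Helmholtz decompositions $P_0(\Omega_h;\RE^2)=\grad S^1_0(\Omega_h)\oplus\curl_h\bigl(CR^1(\Omega_h)/\RE\bigr)$ and $P_0(\Omega_h;\RE^2)=\grad_h CR^1_0(\Omega_h)\oplus\curl S^1(\Omega_h)$, both orthogonalities following from elementwise integration by parts and the zero-mean property of Crouzeix--Raviart jumps, with surjectivity checked by a dimension count via Euler's formula. The first decomposition turns your hypothesis $\grad_h w_{CR}\perp\grad S^1_0(\Omega_h)$ into the existence of the conjugate $\tilde w\in CR^1(\Omega_h)$ with $\curl_h\tilde w=\grad_h w_{CR}$; the second, applied to $w_{CR}\in CR^1_0(\Omega_h)$, yields precisely the inherited orthogonality $\grad_h\tilde w\perp\grad S^1(\Omega_h)$ needed to rerun the companion argument on $\tilde w$ (now with an averaging operator into $S^1(\Omega_h)$ without boundary condition, so that only interior edges contribute, which suffices for the lower bound). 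Two caveats you should make explicit: the conjugation genuinely requires $\Omega$ to be simply connected, an assumption not stated in the lemma but satisfied by every domain used in the paper; and a convention for $[\cdot]_E$ on boundary edges (one-sided trace) must be fixed so that the elementary efficiency bound also covers the boundary contributions present in the sums over $E\in\mathcal E(T)$.
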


We recall the lowest order approximation of the primal formulation where we
use the standard notation $u_h$ for $u_{0,h}$ and $\hat\sigma_h$ for
$u_{1,h}$: find $\lambda_h\in\CO$ such that for $(u_h,\hat\sigma_h)\in U_h$ and
$\varepsilon_h\in V_h$ it holds
\begin{equation}
\label{primal_formulation}
\aligned
    &(\varepsilon_h,v_h)_V + (\grad u_h,\grad v_h)_{\Omega_h} - \langle\hat\sigma_h,v_h 
    \rangle_{\partial \Omega_h} = \lambda_h (u_h,v_h)_{\Omega_h} && \forall v_h \in V_h \\
    &(\grad z_h,\grad \varepsilon_h)_{\Omega_h} -\langle \hat t_h, \varepsilon_h 
    \rangle_{\partial \Omega_h}= 0 && \forall (z_h,\hat t_h) \in U_h,
\endaligned
\end{equation}
where $U_h$ and $V_h$ are defined as follows
\[
\aligned
&U_{h,0}=S^1_0(\Omega_h)\\
&U_{h,1}=P_0(\partial\Omega_h)\cap U_1\\
&U_h = U_{h,0} \times U_{h,1}\\
&V_h=P_1(\Omega_h).
\endaligned
\]

The estimator we are looking at has been introduced
in~\cite{Hellwig2019Adaptive} for the source problem.
We define the following alternative error estimator $\bar \eta$ as follows
\begin{equation}
\bar{\eta}^2=\sum_{T \in \Omega_h} \vert T \vert^{1/2} 
\sum_{E\in \mathcal E (T)} \Vert [\grad_h \varepsilon_h]_E \Vert^2_{L^2(E)}.
\label{def_alternativ_errorestimator}
\end{equation}
The equivalence between the alternative error estimator and the one discussed
in the previous section is stated in the following theorem
and uses orthogonality arguments, which only hold in the lowest order case.
\begin{theorem}
Let $(u_h, \sigma_h) \in U_h$ and $\varepsilon_h \in V_h$ be the solution of 
the discrete primal problem \eqref{primal_formulation}.
Then the two error estimators defined in~\eqref{eq:eta1}
and~\eqref{def_alternativ_errorestimator} are equivalent. 
\end{theorem}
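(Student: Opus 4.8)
The plan is to identify the error representation function $\varepsilon_h$ as a Crouzeix--Raviart function satisfying the orthogonality hypothesis of Lemma~\ref{error_control_residual}, and then to read off the equivalence of $\eta$ and $\bar\eta$ directly from that lemma. The whole argument hinges on the second equation of~\eqref{primal_formulation}, tested separately against the two components of $U_h$.

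First I would take $z_h=0$ and let $\hat t_h$ range over $P_0(\partial\Omega_h)\cap U_1$. Because such $\hat t_h$ are single-valued normal fluxes (they are normal traces of $\Hdiv$ fields), the skeleton pairing $\langle\hat t_h,\varepsilon_h\rangle_{\partial\Omega_h}$ collapses, on each interior edge $E$, to the edge integral of the single-valued flux $\hat t_h$ against the jump $[\varepsilon_h]_E$ --- the opposite outward normals on the two elements sharing $E$ producing the jump --- and on each boundary edge to the integral of $\hat t_h$ against $\varepsilon_h$. Since $\hat t_h$ is an arbitrary piecewise constant on the edges, the condition $\langle\hat t_h,\varepsilon_h\rangle_{\partial\Omega_h}=0$ forces the mean of $[\varepsilon_h]_E$ to vanish on every interior edge and the mean of $\varepsilon_h$ to vanish on every boundary edge. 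For $\varepsilon_h\in P_1(\Omega_h)$ a vanishing edge-mean jump is exactly continuity at the edge midpoint, so this step shows $\varepsilon_h\in CR^1_0(\Omega_h)$. This identification is the crux of the proof and is where the lowest-order structure is indispensable: piecewise constant fluxes test precisely the edge-means of the jumps, and a piecewise linear function is Crouzeix--Raviart if and only if those means vanish.

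Next I would take $\hat t_h=0$ and let $z_h$ range over $S^1_0(\Omega_h)$; the remaining equation reads $(\grad z_h,\grad\varepsilon_h)_{\Omega_h}=0$, which is exactly the orthogonality $\grad_h\varepsilon_h\perp\grad S^1_0(\Omega_h)$ required in Lemma~\ref{error_control_residual}. With both hypotheses verified, the lemma yields
\[
\aligned
\vertiii{\varepsilon_h}^2_{pw}
&\approx\sum_{T\in\Omega_h}|T|^{1/2}\sum_{E\in\mathcal E(T)}\|[\grad_h\varepsilon_h]_E\cdot\nu_E\|^2_{L^2(E)}\\
&\approx\sum_{T\in\Omega_h}|T|^{1/2}\sum_{E\in\mathcal E(T)}\|[\grad_h\varepsilon_h]_E\cdot\tau_E\|^2_{L^2(E)}.
\endaligned
\]
Adding the normal and tangential versions and using $\|[\grad_h\varepsilon_h]_E\|^2=\|[\grad_h\varepsilon_h]_E\cdot\nu_E\|^2+\|[\grad_h\varepsilon_h]_E\cdot\tau_E\|^2$ immediately gives $\vertiii{\varepsilon_h}^2_{pw}\approx\bar\eta^2$.

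It remains to connect $\vertiii{\varepsilon_h}_{pw}$ to $\eta=\|\varepsilon_h\|_V$. Since $V=H^1(\Omega_h)$ is equipped with the broken $H^1$ norm and $\varepsilon_h\in CR^1_0(\Omega_h)$ by the first step, a discrete Poincar\'e--Friedrichs inequality for Crouzeix--Raviart functions controls the $L^2$ part of the norm by the broken gradient, so that $\|\varepsilon_h\|_V\approx\vertiii{\varepsilon_h}_{pw}$. Chaining the three equivalences yields $\eta\approx\bar\eta$ with constants depending only on the shape regularity of $\Omega_h$. The main obstacle is the first step, namely proving rigorously that the skeleton pairing with $P_0(\partial\Omega_h)\cap U_1$ fluxes encodes exactly the edge-mean jumps of $\varepsilon_h$ (with the correct sign bookkeeping coming from the opposite outward normals on shared edges), so that $\varepsilon_h$ indeed lands in $CR^1_0(\Omega_h)$; once this is in place, the remaining steps are routine applications of Lemma~\ref{error_control_residual} and a standard discrete Poincar\'e inequality.
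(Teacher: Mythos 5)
Your proposal is correct and follows essentially the same route as the paper: test the second equation of~\eqref{primal_formulation} with $(0,\hat t_h)$ to place $\varepsilon_h$ in $CR^1_0(\Omega_h)$, with $(z_h,0)$ to get the $\grad S^1_0(\Omega_h)$-orthogonality, then invoke Lemma~\ref{error_control_residual} and the discrete Friedrichs inequality to chain $\bar\eta\approx\vertiii{\varepsilon_h}_{pw}\approx\|\varepsilon_h\|_V=\eta$. Your extra step of summing the normal and tangential jump equivalences to recover the full jump in $\bar\eta$ is a detail the paper leaves implicit, but it is the same argument.
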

\begin{proof}
We follow here the arguments of \cite[Theorem 4.1]{Hellwig2019Adaptive}.

For any $\hat s_0\in U_{h,1}$ we have
$0=-b(0,\hat s_0;\varepsilon_h)=\dual{\hat s_0}{\varepsilon_h}{\partial\Omega_h}$. It
follows that $\int_E [\varepsilon_h]_E=0$ for all $E\in\partial\Omega_h$ so
that $\varepsilon_h$ belongs to $CR^1_0(\Omega_h)$. If we choose $(w,0)\in
U_h$ as test function, then $(\grad w_C,\grad\varepsilon_h)=0$ for all
$w_C\in S^1_0(\Omega_h)$ and the second property follows.

So $\varepsilon_h$ satisfies the assumptions of Lemma \ref{error_control_residual} and
we can conclude from~\eqref{def_alternativ_errorestimator} 
\[
\vertiii{\varepsilon_h}^2_{pw}\approx
\sum_{T \in\Omega_h}\|T\|^{1/2} 
\sum_{E\in\mathcal E(T)}\|[\grad_h\varepsilon_h]_E\|^2_{L^2(E)} = \bar \eta^2.
\]
Now from the discrete Friedrichs inequality for
Crouzeix--Raviart spaces~\cite{Brenner_2008} it follows
\[
\aligned
\bar\eta^2\approx\vertiii{\varepsilon_h}_{pw}^2&\le\|\varepsilon_h\|_{L^2(\Omega_h)}^2
+\vertiii{\varepsilon_h}_{pw}^2=\eta^2\\
&\lesssim\vertiii{\varepsilon_h}_{pw}^2\approx\bar\eta^2.
\endaligned
\]

\end{proof}

We observe, in particular, that the alternative error estimator $\bar{\eta}$
inherits the global efficiency and reliability from $\eta$.

To conclude this section, we remark that the alternative error estimator can be
defined for the ultra weak formulation as well. The formulation we are
considering is: find $\lambda_h\in\CO$ and
$(u_h,\bfsigma_h,\hat\sigma_{n,h},\hat u_h)\in U_h$ with $u_h\ne0$ such that for
$\varepsilon_h=(v_h,\bftau_h)\in V_h$ it holds
\begin{equation}
\left\{
\aligned
&(\varepsilon_h,(v,\bftau))_V+b(u_h,\bfsigma_h,\hat\sigma_{n,h},\hat u_h;v,\bftau)
=\lambda_h m(u_h,v)
&&\forall (v,\bftau)\in V_h\\
&b(w,r,s,t;\varepsilon_h)=0&&\forall (w,r,s,t)\in U_h,
\endaligned
\right.
\label{ultra_2}
\end{equation}
where the discrete spaces are given by
\[
U_h=
P_0(\Omega_h)
\times
P_0(\Omega_h; \mathbb R^2) 
\times S^1_0(\partial \Omega_h) \times P_0(\partial \Omega_h)
\]
and
\[
V_h=
P_1(\Omega_h)
\times
RT_0(\Omega_h).
\]

We can now define the alternative error estimator for the ultra weak
formulation analogously as we have done for the primal formulation as follows
\begin{equation}\label{eq:etatilde}
    \aligned
        \tilde{\eta}^2  :={}& \sum_{T \in \Omega_h} \vert T \vert^{1/2} 
        \sum_{E\in \mathcal E (T)} \Vert [\grad_h v_h]_E \Vert^2_{L^2(E)} \\
        ={}&\sum_{T \in \Omega_h} \vert T \vert^{1/2} 
        \sum_{E\in \mathcal E (T)} \Vert [\bftau_h]_E \Vert^2_{L^2(E)}.
    \endaligned
\end{equation}

Also in this case the alternative error estimator is equivalent to the natural
one with similar arguments as in~\cite[Theorem 4.4]{Hellwig2019Adaptive}.

\begin{theorem}
Let $\varepsilon_h = (v_h, \bftau_h) \in V_h$ and
$(u_h,\bfsigma_h,\hat\sigma_{n,h},\hat u_h) \in U_h$ be the solution of the
discrete ultra weak problem~\eqref{ultra_2}. Then the two error estimators for
the ultra weak formulation defined in~\eqref{eq:eta1} and~\eqref{eq:etatilde}
are equivalent. 
\end{theorem}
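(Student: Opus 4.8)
The plan is to follow the blueprint of the primal case and of \cite[Theorem 4.4]{Hellwig2019Adaptive}: I first characterise the two components $v_h\in P_1(\Omega_h)$ and $\bftau_h\in RT_0(\Omega_h)$ of the error representation function $\varepsilon_h=(v_h,\bftau_h)$ by exploiting the constraint equation (the second line of~\eqref{ultra_2}), which states that $b(w,r,s,t;\varepsilon_h)=0$ for every $(w,r,s,t)\in U_h$. Testing against each of the four factors of $U_h$ separately produces the structural properties of $\varepsilon_h$ needed to bring Lemma~\ref{error_control_residual} and the discrete Friedrichs inequality into play, exactly as in the primal proof.

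First I would extract the algebraic identities. Choosing $(w,0,0,0)$ with $w\in P_0(\Omega_h)$ gives $(w,\div_h\bftau_h)_{\Omega_h}=0$; since the divergence of a lowest-order Raviart--Thomas field is elementwise constant, this forces $\div_h\bftau_h=0$ and hence $\bftau_h$ to be elementwise constant. Choosing $(0,r,0,0)$ with $r\in P_0(\Omega_h;\RE^2)$ then yields $(\bftau_h-\grad_h v_h,r)_{\Omega_h}=0$, and because both $\bftau_h$ and $\grad_h v_h$ are now elementwise constant this gives the pointwise identity $\bftau_h=\grad_h v_h$; in particular the two expressions for $\tilde\eta^2$ in~\eqref{eq:etatilde} coincide. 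Choosing $(0,0,0,t)$ with $t\in P_0(\partial\Omega_h)$ gives $\dual{v_h}{t}{\partial\Omega_h}=0$, so that $\int_E[v_h]_E=0$ on interior edges and $\int_E v_h=0$ on boundary edges; since $v_h$ is affine this means $v_h$ is continuous at every interior edge midpoint and vanishes at every boundary edge midpoint, i.e.\ $v_h\in CR^1_0(\Omega_h)$. Finally, choosing $(0,0,s,0)$ with $s\in S^1_0(\partial\Omega_h)$ gives $\dual{s}{\bftau_h\cdot\bfn}{\partial\Omega_h}=0$; identifying $s$ with a continuous $w_C\in S^1_0(\Omega_h)\cap H^1_0(\Omega)$ and integrating by parts elementwise (using $\div_h\bftau_h=0$ and $w_C|_{\partial\Omega}=0$) converts this skeleton term into $(\grad_h v_h,\grad w_C)_{\Omega_h}=0$, that is, $\grad_h v_h\perp\grad S^1_0(\Omega_h)$ in $L^2$.

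With these properties $v_h$ satisfies exactly the hypotheses of Lemma~\ref{error_control_residual}, which therefore gives $\vertiii{v_h}_{pw}^2\approx\sum_{T\in\Omega_h}|T|^{1/2}\sum_{E\in\mathcal E(T)}\|[\grad_h v_h]_E\cdot\nu_E\|_{L^2(E)}^2$ and the same with the tangential component $\tau_E$. Adding the normal and tangential contributions and using $\|[\grad_h v_h]_E\|^2=\|[\grad_h v_h]_E\cdot\nu_E\|^2+\|[\grad_h v_h]_E\cdot\tau_E\|^2$ yields $\tilde\eta^2\approx\vertiii{v_h}_{pw}^2$. On the other hand, inserting $\bftau_h=\grad_h v_h$ and $\div_h\bftau_h=0$ into the broken $V$-norm gives the explicit identity $\eta^2=\|\varepsilon_h\|_V^2=\|v_h\|_{L^2(\Omega)}^2+2\vertiii{v_h}_{pw}^2$. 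The discrete Friedrichs inequality for Crouzeix--Raviart functions~\cite{Brenner_2008} bounds $\|v_h\|_{L^2(\Omega)}\lesssim\vertiii{v_h}_{pw}$, so that $\vertiii{v_h}_{pw}^2\le\eta^2\lesssim\vertiii{v_h}_{pw}^2$; combining this with $\tilde\eta^2\approx\vertiii{v_h}_{pw}^2$ proves $\eta\approx\tilde\eta$.

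The main obstacle, and the place where the lowest-order restriction is essential, is the characterisation step: the identity $\bftau_h=\grad_h v_h$ relies on $RT_0$ fields with vanishing divergence being exactly the elementwise constants, and the orthogonality $\grad_h v_h\perp\grad S^1_0$ requires the elementwise integration by parts to have no interior residual, which is what $\div_h\bftau_h=0$ and the affine nature of $v_h$ guarantee. Once these are in hand, the rest is the same Pythagoras-and-Friedrichs bookkeeping as in the primal case, and the factor $2$ in $\eta^2=\|v_h\|_{L^2(\Omega)}^2+2\vertiii{v_h}_{pw}^2$ (coming from the two copies of $\grad_h v_h$ appearing in $\|v_h\|_{H^1(\Omega_h)}^2$ and in $\|\bftau_h\|_{L^2}^2$) is harmless for the equivalence.
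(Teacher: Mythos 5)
Your proof is correct and follows essentially the same route as the paper's: derive $\div_h\bftau_h=0$, $\bftau_h=\pm\grad_h v_h$, $v_h\in CR^1_0(\Omega_h)$ and $\grad_h v_h\perp\grad S^1_0(\Omega_h)$ from the constraint equation, then combine Lemma~\ref{error_control_residual} with the discrete Friedrichs inequality. (The only discrepancy is the sign in $\bftau_h=\grad_h v_h$ versus the paper's $\bftau_h=-\grad_h v_h$, which is immaterial since only $\|\bftau_h\|=\|\grad_h v_h\|$ enters the argument; your version is also somewhat more explicit about which test-function choice yields which identity.)
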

\begin{proof}
Similarly to the primal case, we first check that the assumptions of 
Lemma~\ref{error_control_residual} are fulfilled.

We can choose as test functions $(\tilde{w},\tilde{r}, 0, \tilde{s})\in U_h$. 
It follows from~\eqref{ultra_2} that $v_h$ belongs to $CR^1_0(\Omega_h)$, that
$\div \bftau_h=0$, and that $\bftau_h+\grad_h v=0$. Now we choose as test function
$(0,0,\tilde{s},0)$ with $\tilde{s}\in S^1_0(\Omega_h)$.
After integration by parts it follows
\[
\aligned
\dual{\bftau\cdot n}{\tilde{s}}{\partial \Omega_h}
&=(\div_h\bftau,\tilde{s})_{L^2(\Omega)}+(\bftau,\grad_h\tilde{s})_{L^2(\Omega)}\\
&=-(\grad_hv,\grad_h\tilde{s})_{L^2(\Omega)}=0,
\endaligned
\]
where $\div_h$ denotes the broken divergence operator.

From the identity $\bftau_h = -\nabla_h v_h$ just shown it follows

\begin{align*}
    \Vert v_h \Vert_{H^1(\Omega_h)}^2 &\leq \Vert \bftau_h\Vert_{H(\div,\Omega_h)}^2 + 
    \Vert v_h \Vert_{H^1(\Omega_h)}^2  = \Vert (\bftau_h,v_h)\Vert_V^2 \\
    & = \Vert \bftau_h \Vert_{L^2(\Omega)}^2 +\Vert \div \bftau_h \Vert^2_{L^2(\Omega)}
    + \Vert v_h \Vert_{H^1(\Omega_h)}^2 \\
    &= \Vert \grad v_h \Vert_{L^2(\Omega)}^2 +  \Vert v_h \Vert_{H^1(\Omega_h)}^2 
    \leq 2 \Vert v_h \Vert_{H^1(\Omega_h)}^2 .
\end{align*}
Now we can use Lemma~\ref{error_control_residual} together with the discrete
Friedrichs inequality to get the result
\[
\tilde \eta^2 \approx \vertiii{v_h}^2 \leq \Vert v_h \Vert_{H^1(\Omega_h)}^2
\leq \Vert (\bftau_h,v_h)  \Vert_V^2 = \eta^2 \leq 2 \Vert v_h \Vert_{H^1(\Omega_h)}^2
\lesssim \vertiii{v_h}^2 \approx \tilde \eta ^2.
\]
\end{proof}
Hence, also in this case, $\tilde{\eta}$ inherits the global efficiency and
reliability from $\eta$.

\subsection{The nonlinear term $\|\lambda u_0-\lambda_h u_{0,h}\|$}
\label{se:HOT}

We show in this section how to compare the nonlinear term
$\|\lambda u_0-\lambda_h u_{0,h}\|$ to the error $\|u-u_h\|_U$.
We are going to use the following identity
\begin{equation}
\lambda u_0-\lambda_hu_{0,h}=\lambda(u_0-u_{0,h})+u_{0,h}(\lambda-\lambda_h).
\label{eq:+-}
\end{equation}
A standard way to show that this is a higher order term is to observe that
$\|u_0-u_{0,h}\|$ is converging in the $L^2(\Omega)$ norm faster than the
error in the energy norm and that $|\lambda-\lambda_h|$ is converging with
double order.

In the DPG approximations that we are discussing, we can see that this
property is valid for $|\lambda-\lambda_h|$, while particular attention has to
be paid for the term $\|u_0-u_{0,h}\|$. Indeed, the a priori estimates for the
primal formulation recalled in~\eqref{eq:estprimalfull}
and~\eqref{eq:estprimal} guarantee that $\|u_0-u_{0,h}\|_{L^2(\Omega)}$ is of
higher order with respect to $\|u-u_h\|_U$, while this is not always the case
for the ultra weak formulation. When the augmented formulation is used,
Theorem~\ref{th:UW+} guarantees that we have the desired property. We
summarize the obtained results in the following statement.

\begin{proposition}
\label{pr:HOT}
In the case of the primal DPG formulation presented in Section~\ref{se:primal}
and of the augmented  ultra weak DPG formulation presented in
Section~\ref{se:UW}, the nonlinear term
$\|\lambda u_0-\lambda_h u_{0,h}\|_{L^2(\Omega)}$ is of order
$O(h^{\tau+\upsilon})$ when $h$ goes to zero, with respect to the energy norm
error $\|u-u_h\|_U$ which is of order $O(h^\tau)$, where $\tau$ and $\upsilon$
are defined in Theorems~\ref{th:th1} and~\ref{th:th2},in
Proposition~\ref{pr:pr1}, and in Theorem~\ref{th:UW+}.
\end{proposition}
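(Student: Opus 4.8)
The plan is to start from the splitting identity~\eqref{eq:+-}, which decomposes the nonlinear term into two pieces:
\[
\lambda u_0-\lambda_h u_{0,h}
=\lambda(u_0-u_{0,h})+u_{0,h}(\lambda-\lambda_h).
\]
Taking the $L^2(\Omega)$ norm and applying the triangle inequality gives
\[
\|\lambda u_0-\lambda_h u_{0,h}\|_{L^2(\Omega)}
\le |\lambda|\,\|u_0-u_{0,h}\|_{L^2(\Omega)}
+\|u_{0,h}\|_{L^2(\Omega)}\,|\lambda-\lambda_h|.
\]
Here $|\lambda|$ is a fixed constant depending only on the continuous problem, and $\|u_{0,h}\|_{L^2(\Omega)}$ is bounded uniformly in $h$ (the discrete eigenfunctions can be normalized and converge to $u_0$ in $L^2$, so their norms stay bounded). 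Thus the whole estimate reduces to controlling the two scalar factors $\|u_0-u_{0,h}\|_{L^2(\Omega)}$ and $|\lambda-\lambda_h|$ separately, and showing that each is at least of order $O(h^{\tau+\upsilon})$.

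For the eigenvalue factor, Theorems~\ref{th:th1} and~\ref{th:th2} already give the double-order convergence $|\lambda-\lambda_h|\le Ch^{2\tau}$. Since $\upsilon=\min\{s,1\}\le\tau$ in the regimes considered (and certainly $2\tau\ge\tau+\upsilon$ because $\tau\ge\upsilon$), this term is of order $O(h^{2\tau})$, which is no worse than the claimed $O(h^{\tau+\upsilon})$. For the $L^2$ eigenfunction factor I would invoke the higher-order estimates already recorded in the excerpt: for the primal formulation the second line of~\eqref{eq:estprimal} gives exactly $\|u_0-u_{0,h}\|_{L^2(\Omega)}\le Ch^{\tau+\upsilon}\|u_0\|_{H^{1+s}(\Omega)}$, while for the augmented ultra weak formulation the second estimate of Theorem~\ref{th:UW+} gives $\|u_0-u_{0,h}\|_{U_0}\le Ch^{\tau+s'}\|u_0\|_{H^{1+s}(\Omega)}$ with $U_0=L^2(\Omega)$ and $s'\in(1/2,1]$ playing the role of $\upsilon$. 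Combining, both factors are bounded by $Ch^{\tau+\upsilon}$, which is strictly higher order than the energy norm error $\|u-u_h\|_U=O(h^\tau)$ guaranteed by~\eqref{eq:estprimalfull} and by the first estimate of Theorem~\ref{th:UW+}.

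The only genuinely delicate point—and the reason the proposition carefully distinguishes the augmented ultra weak formulation—is that the superconvergent $L^2$ bound on $u_0-u_{0,h}$ is \emph{not} available for the plain ultra weak discretization, where $U_0=L^2(\Omega)$ is approximated only to order $h^\tau$, the same rate as the energy error. This is exactly why augmenting the volumetric space (raising its polynomial degree from $k$ to $k+1$) is needed: it is what produces the Aubin--Nitsche-type gain $s'$ in Theorem~\ref{th:UW+}. Consequently the hard part is not any new computation but recognizing which a priori estimate supplies the extra factor $h^\upsilon$ in each case, and verifying that the augmentation genuinely restores the higher-order property that the primal formulation enjoys automatically. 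Once the correct estimate is selected for each formulation, assembling the bound is immediate, and the proposition follows.
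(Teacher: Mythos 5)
Your proposal is correct and follows essentially the same route as the paper: the paper's proof is a one-line appeal to the identity~\eqref{eq:+-} together with the a priori estimates~\eqref{eq:estprimalfull}, \eqref{eq:estprimal} and Theorem~\ref{th:UW+}, which is exactly the decomposition and the set of bounds you invoke. Your additional remarks (uniform boundedness of $\|u_{0,h}\|_{L^2(\Omega)}$, the double-order eigenvalue convergence, and the role of the augmentation in restoring the $L^2$ superconvergence for the ultra weak case) merely make explicit what the paper leaves implicit.
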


\begin{proof}
The result follows from the identity~\eqref{eq:+-} and the a priori estimates
presented in~\eqref{eq:estprimalfull}, \eqref{eq:estprimal} and in
Theorem~\ref{th:UW+}.
\end{proof}

\section{Numerical results}
\label{se:num}

This section presents selected numerical experiments in two dimensions for the
primal and ultra weak formulations. We implemented the lowest order method
based on~\cite{AFEM} and~\cite{Hellwig2019Adaptive}. For the adaptive mesh
refinement we used the classical AFEM algorithm~\cite{AoA} with D\"orfler
marking and bulk parameter $\theta =0.5$. 
For the higher order formulations we used NETGEN/NGSOLVE \cite{NETGEN,NGSOLVE}.
In case of the ultraweak formulation we used the discretization 
from \eqref{ultradiskret}.
In Figure~\ref{mesh_figure} we show some of our mesh obtained after four
refinements. As expected, the meshes are strongly refined where singular
solutions are expected.
\begin{figure}[ht]
    \begin{subfigure}[b]{0.45\textwidth}
        \subcaption{Slit domain adaptive mesh for primal DPG}
        \includegraphics[scale = 0.41]{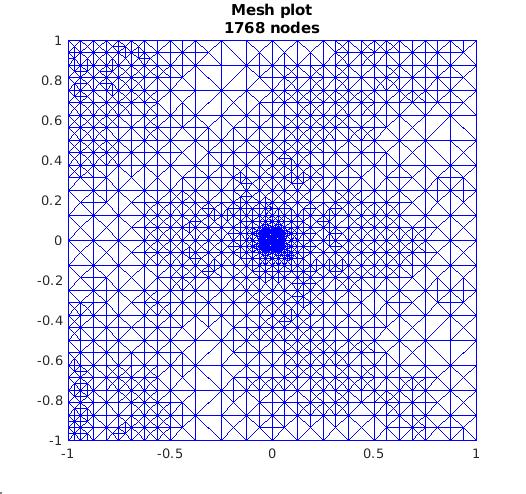}
    \end{subfigure}
    \hfill
    \begin{subfigure}[b]{0.45\textwidth}
        \subcaption{L-shaped domain adaptive mesh for primal DPG}
        \includegraphics[scale = 0.41]{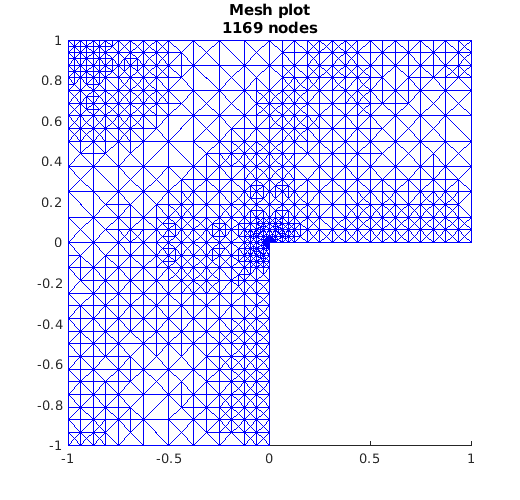}
    \end{subfigure}
    \label{adaptive_meshes}
    \caption{Adaptive meshes after 4 Iterations}
    \label{mesh_figure}
\end{figure}
\subsection{Numerical results on the square domain}
Our first domain is the convex square domain, which is defined
as $\Omega = [0,1]^2$. For this domain the exact solution is well
known. Here we compare our solution with the first eigenvalue 
$\lambda_1 = 2\pi^2$. In Figure \ref{SquareHO} we can clearly 
see that the methods show the expected convergence rates.

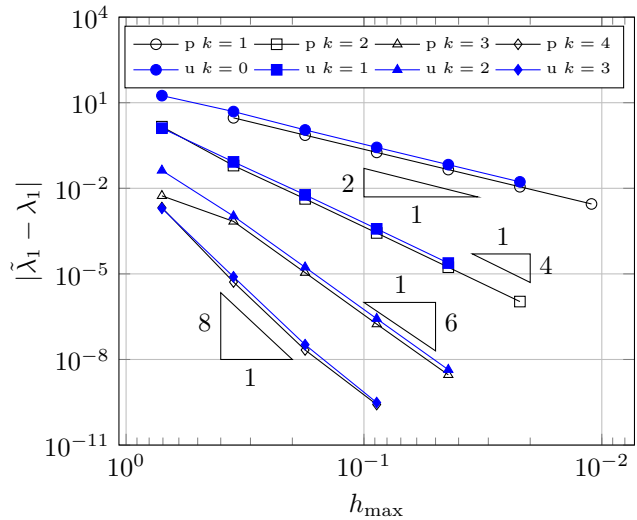
\begin{figure}[ht]
\begin{tikzpicture}
    \begin{loglogaxis}[
        xlabel={$h_{\mathrm{max}}$},
        ylabel={$|\tilde\lambda_1 -\lambda_1|$},
        grid=major,
        legend style={font=\tiny,legend columns=4},
        ymax=10e3,
        ymin=10e-12,
        x dir= reverse]
        %primal
        \addplot[color=black, mark=o] table [x=hmax,y=err]
        {data/primal_square_k1.txt};
        \addlegendentry{p $k=1$}

        \addplot[color=black, mark=square] table [x=hmax,y=err]
        {data/primal_square_k2.txt};
        \addlegendentry{p $k=2$}
        
        \addplot[color=black, mark=triangle] table [x=hmax,y=err]
        {data/primal_square_k3.txt};
        \addlegendentry{p $k=3$}
       
        \addplot[color=black, mark=diamond] table [x=hmax,y=err]
        {data/primal_square_k4.txt};
        \addlegendentry{p $k=4$}
        
        %ultra-weak
        \addplot[color=blue, mark=*] table [x=hmax,y=err]
        {data/ultra_square_k0.txt};
        \addlegendentry{u $k=0$}

        \addplot[color=blue, mark=square*] table [x=hmax,y=err]
        {data/ultra_square_k1.txt};
        \addlegendentry{u $k=1$}
        
        \addplot[color=blue, mark=triangle*] table [x=hmax,y=err]
        {data/ultra_square_k2.txt};
        \addlegendentry{u $k=2$}
        
        \addplot[color=blue, mark=diamond*] table [x=hmax,y=err]
        {data/ultra_square_k3.txt};
        \addlegendentry{u $k=3$}
        
        %Triangles
        \draw (axis cs:0.1,0.05) -- (axis cs:0.1,0.005) -- (axis cs:0.0333,0.005) -- cycle;
        \node at (axis cs:0.1,0.015)[left] {$2$};
        \node at (axis cs:0.06,0.005)[below] {$1$};
        
        \draw (axis cs:0.02,0.00005) -- (axis cs:0.02,0.000005) -- (axis cs:0.035,0.00005) -- cycle;
        \node at (axis cs:0.02,0.00002 )[right]{$4$};
        \node at (axis cs:0.026,0.00005)[above]{$1$};

        \draw (axis cs:0.1,0.000001) -- (axis cs:0.05,0.000001) -- (axis cs:0.05,0.00000002) -- cycle;
        \node at (axis cs:0.07,0.000001)[above]{$1$};
        \node at (axis cs:0.05,0.0000002)[right]{$6$};

        \draw (axis cs:0.4,0.0000022) -- (axis cs:0.4,0.00000001) -- (axis cs:0.2,0.00000001) -- cycle;
        \node at (axis cs:0.4,0.0000002)[left]{$8$};
        \node at (axis cs:0.3,0.00000001)[below]{$1$};
    \end{loglogaxis}
\end{tikzpicture}
\caption{Convergence rates on the square domain, uniform refinement (p =
primal, u = ultra weak)}\label{SquareHO}
\end{figure}

\subsection{Numerical results on the L-shaped domain}
On the non-convex L-shaped domain
$\Omega = (-1,1)^2 \setminus \big([0,1) \times (-1,0]\big)$ we used
the reference values  $$\lambda_1 = 9.639723844871536\ \text{and}\ \lambda_5 = 31.91263.$$
These two eigenvalues correspond to singular eigenspaces that belong to
$H^{1+s}(\Omega)$ with $s<1/2$.
For this reason we expect a convergence rate of $2/3$ in terms of the number of
degrees of freedom when using a uniform mesh refinement. This rate of
convergence is clearly detected for $\lambda_1$ in
Figure~\ref{first_eigenvalue}, while in the case of $\lambda_5$ a
pre-asymptotic convergence is detected: the rate of convergence is
approximately $0.78$ in the last iteration of Figure~\ref{fifth_eigenvalue},
but it can be seen that the rate is degenerating as the mesh is refined.
For bulk parameter $\theta =0.5$ all adaptive methods show optimal convergence
rates.

For the higher order methods figure \ref{lshapeHO} shows a similar behaviour 
as in the lowest order case. Moreover, the convergence rate can be restored 
in the adaptive case (Figure \ref{lshapeadapHO}).

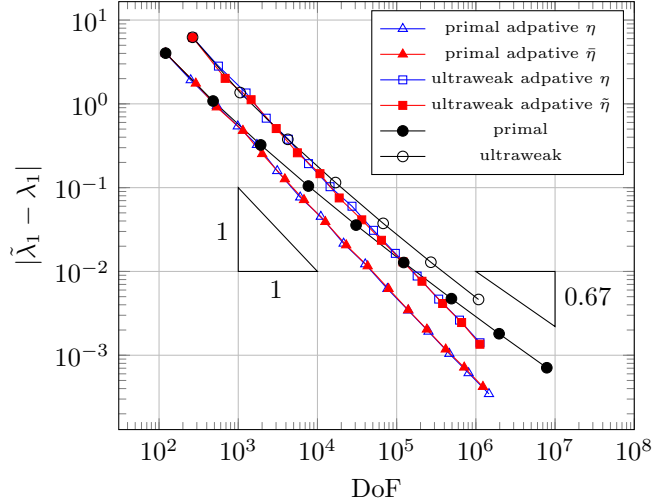
\begin{figure}[ht]
\begin{tikzpicture}
    \begin{loglogaxis}[
        xlabel={DoF},
        ylabel={$|\tilde\lambda_1- \lambda_1| $},
        grid=major,
        legend style={font=\tiny},
        xmax=10e7]

        \addplot[color=blue,mark=triangle] table [x=dof, y=abserror1, color =blue]
        {data/primal_Lshape_t=0.5_epsilon.dat};
        \addlegendentry{primal adpative $\eta$}

        \addplot[color=red,mark=triangle*] table [x=dof, y=abserror1]
        {data/primal_Lshape_t=0.5_reduced.dat};
        \addlegendentry{primal adpative $\bar \eta$}

        \addplot[color=blue,mark=square,mark size=1.5pt] table [x=dof, y=abserror1]
        {data/ultraweak_Lshape_t=0.5_epsilon.dat};
        \addlegendentry{ultraweak adpative $\eta$}

        \addplot[color=red,mark=square*,mark size=1.5pt ] table [x=dof, y=abserror1]
        {data/ultraweak_Lshape_t=0.5_reduced.dat};
        \addlegendentry{ultraweak adpative $\tilde \eta$}

        \addplot[color=black,mark=*] table [x=dof, y=abserror1]
        {data/primal_Lshape_ref.dat};
        \addlegendentry{primal}

        \addplot[color=black,mark=o] table [x=dof, y=abserror1]
        {data/ultraweak_Lshape_ref.dat};
        \addlegendentry{ultraweak}

        \draw (axis cs:10000,1e-2)--(axis cs:1000 ,1e-2)--(axis cs:1000 ,1e-1)--cycle;
        \node at (axis cs:3000,1e-2) [below] {$1$};
        \node at (axis cs:1000,0.03) [left] {$1$};

        \draw (axis cs:10000000,0.01)--(axis cs: 10000000,0.0022)--(axis cs: 1000000,0.01)--cycle;
        \node at (axis cs:10000000,0.005) [right] {$0.67$};
    \end{loglogaxis}
\end{tikzpicture}
\caption{Convergence rates for the L-shaped domain, first eigenvalue}\label{first_eigenvalue}
\end{figure}

\begin{figure}[ht]
    \begin{tikzpicture}
        \begin{loglogaxis}[
        xlabel={DoF},
        ylabel={$|\tilde\lambda_5 - \lambda_5| $},
        grid=major,
        legend style={font=\tiny},
        xmax=30000000,
        ymax=10e1,
        ]

        \addplot[color=blue,mark=triangle] table [x=dof, y=abserror1, color =blue]
        {data/primal_Lshape_t=0.5_epsilon_EV5.dat};
        \addlegendentry{primal adpative $\eta$}

        \addplot[color=red,mark=triangle*] table [x=dof, y=abserror1]
        {data/primal_Lshape_t=0.5_reduced_EV5.dat};
        \addlegendentry{primal adpative $\bar \eta$}

        \addplot[color=blue,mark=square] table [x=dof, y=abserror1]
        {data/ultraweak_Lshape_t=0.5_epsilon_EV5.dat};
        \addlegendentry{ultraweak adpative $\eta$}

        \addplot[color=red,mark=square*] table [x=dof, y=abserror1]
        {data/ultraweak_Lshape_t=0.5_reduced_EV5.dat};
        \addlegendentry{ultraweak adpative $\tilde \eta$}

        \addplot[color=black,mark=*] table [x=dof, y=abserror1]
        {data/primal_Lshape_ref_EV5.dat};
        \addlegendentry{primal}

        \addplot[color=black,mark=o] table [x=dof, y=abserror1]
        {data/ultraweak_Lshape_ref_EV5.dat};
        \addlegendentry{ultraweak}
        
        \draw (axis cs:10000,1e-2)--(axis cs:1000 ,1e-2)--(axis cs:1000 ,1e-1)--cycle;
        \node at (axis cs:3000,1e-2) [below] {$1$};
        \node at (axis cs:1000,0.03) [left] {$1$};

        \draw (axis cs:1e5,1e-1) -- (axis cs:1e6,0.02173) -- (axis cs:1e6,1e-1)-- cycle;
        \node at (axis cs:300000,1e-1)[above]{$1$};
        \node at (axis cs:1e6,0.07)[right]{$\approx 0.78$};
        \end{loglogaxis}
    \end{tikzpicture}
    
    \caption{Convergence rates for the L-shaped domain, fifth eigenvalue}\label{fifth_eigenvalue}
\end{figure}

\begin{figure}[ht]
\begin{tikzpicture}
    \begin{loglogaxis}[
        xlabel={$h_{\mathrm{max}}$},
        ylabel={$ |\tilde\lambda_1 -\lambda_1|$},
        grid=major,
        legend style={font=\tiny,legend columns=4},
        ymax=150,
        ymin=10e-5,
        x dir= reverse]
        %primal
        \addplot[color=black, mark=o] table [x=hmax,y=err]
        {data/primal_lshape_k1.txt};
        \addlegendentry{p $k=1$}

        \addplot[color=black, mark=square] table [x=hmax,y=err]
        {data/primal_lshape_k2.txt};
        \addlegendentry{p $k=2$}
        
        \addplot[color=black, mark=triangle] table [x=hmax,y=err]
        {data/primal_lshape_k3.txt};
        \addlegendentry{p $k=3$}
       
        \addplot[color=black, mark=diamond] table [x=hmax,y=err]
        {data/primal_lshape_k4.txt};
        \addlegendentry{p $k=4$}
        
        %ultra-weak
        \addplot[color=blue, mark=*] table [x=hmax,y=err]
        {data/ultra_lshape_k0.txt};
        \addlegendentry{u $k=0$}

        \addplot[color=blue, mark=square*] table [x=hmax,y=err]
        {data/ultra_lshape_k1.txt};
        \addlegendentry{u $k=1$}
        
        \addplot[color=blue, mark=triangle*] table [x=hmax,y=err]
        {data/ultra_lshape_k2.txt};
        \addlegendentry{u $k=2$}
        
        \addplot[color=blue, mark=diamond*] table [x=hmax,y=err]
        {data/ultra_lshape_k3.txt};
        \addlegendentry{u $k=3$}
        
        %Triangles
        \draw (axis cs:0.3,0.01) -- (axis cs:0.3,0.0022) -- (axis cs:0.1,0.0022) -- cycle;
        \node at (axis cs:0.3,0.005)[left] {$\approx1.33$};
        \node at (axis cs:0.2,0.002)[below] {$1$};
        
    \end{loglogaxis}
\end{tikzpicture}
\caption{Convergence rates for higher order on L-shaped domain (p = primal, u
= ultraweak)}\label{lshapeHO}
\end{figure}
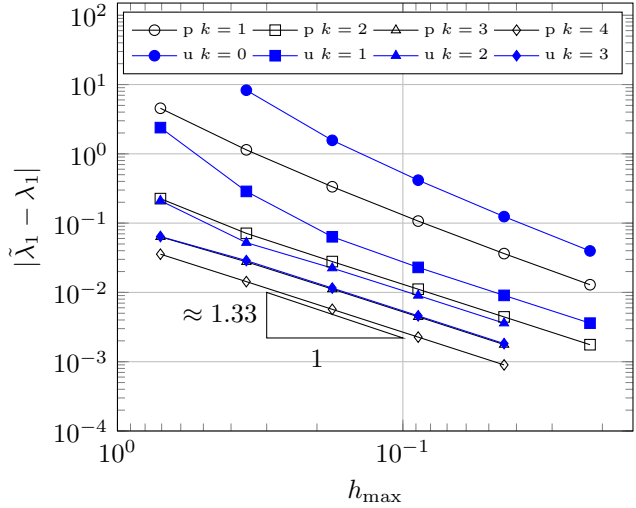

\begin{figure}[ht]
\begin{tikzpicture}
    \begin{loglogaxis}[
        xlabel={DoF},
        ylabel={$ |\tilde\lambda_1 - \lambda_1|  $},
        grid=major,
        legend style={font=\tiny,legend columns=4},
        xmax=10e5,
        ymax=10000,
        ymin=10e-12]
        \addplot[color=black, mark=o] table [x=dof,y=err]
        {data/adap_primal_lshape_k1.txt};
        \addlegendentry{p $k=1$}
        \addplot[color=blue, mark=square] table [x=dof,y=err]
        {data/adap_primal_lshape_k2.txt};
        \addlegendentry{p $k=2$}
        
        \addplot[color=red, mark=triangle] table [x=dof,y=err]
        {data/adap_primal_lshape_k3.txt};
        \addlegendentry{p $k=3$}

        \addplot[color=green, mark=diamond] table [x=dof,y=err]
        {data/adap_primal_lshape_k4.txt};
        \addlegendentry{p $k=4$}
        
        \addplot[color=black, mark=*] table [x=dof,y=err]
        {data/adap_ultra_lshape_k0.txt};
        \addlegendentry{u $k=0$}
        
        \addplot[color=blue, mark=square*] table [x=dof,y=err]
        {data/adap_ultra_lshape_k1.txt};
        \addlegendentry{u $k=1$}
        
        \addplot[color=red, mark=triangle*] table [x=dof,y=err]
        {data/adap_ultra_lshape_k2.txt};
        \addlegendentry{u $k=2$}
        
        \addplot[color=green, mark=diamond*] table [x=dof,y=err]
        {data/adap_ultra_lshape_k3.txt};
        \addlegendentry{u $k=3$}
        
        \draw (axis cs:100000,1) -- (axis cs:10000,1) -- (axis cs:100000,0.1) -- cycle;
        \node at (axis cs:33000,1)[above] {$1$};
        \node at (axis cs:100000,0.3)[right] {$1$};

        \draw (axis cs:200000,0.001) -- (axis cs:200000,0.0001) -- (axis cs:60000,0.001) -- cycle;
        \node at (axis cs:200000,0.0003)[right]{$2$};
        \node at (axis cs:100000,0.00075)[above]{$1$};

        \draw (axis cs:10000,0.00001) -- (axis cs:4000,0.00001) -- (axis cs:4000,0.0002) -- cycle;
        \node at (axis cs:4000,0.00006)[left]{$3$};
        \node at (axis cs:6500,0.00001)[below]{$1$};

        \draw (axis cs:10000,0.000001) -- (axis cs:10000,0.00000001) -- (axis cs: 20000, 0.00000001) -- cycle;
        \node at (axis cs:10000,0.00000008)[left]{$4$};
        \node at (axis cs:14000,0.000000001)[above]{$1$};
        
    \end{loglogaxis}
\end{tikzpicture}
\caption{Adaptive convergence rates for the L-shaped domain for higher order(p
= primal, u = ultraweak)}\label{lshapeadapHO}
\end{figure}
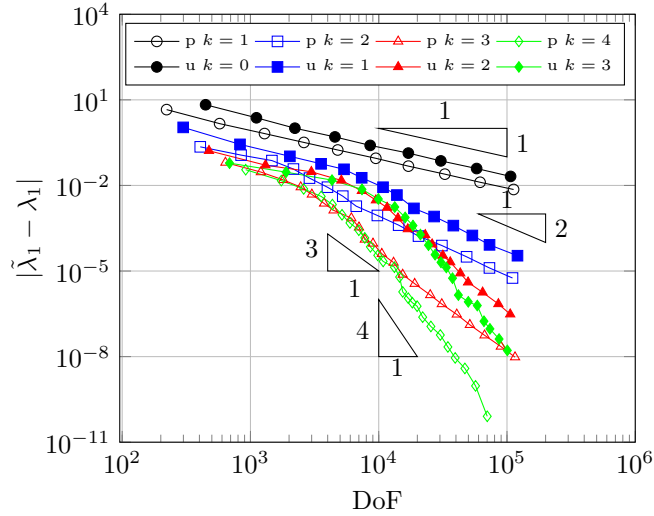
\subsection{Numerical results on the slit domain}
On the non-convex slit domain
$\Omega = (-1,1)^2 \setminus \big([0,1) \times \{0\}\big)$ we 
used $\lambda_1 = 8.371329711$ as reference solution.
For the first eigenvalue we expect a convergence 
rate of $1/2$ when the mesh is refined uniformly. Like for the L-shape domain, the
adaptive methods can recover the optimal convergence rate.

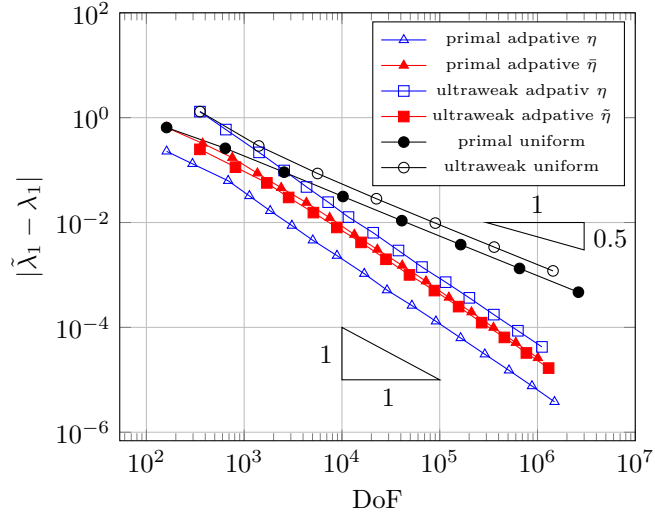
\begin{figure}[ht]
\begin{tikzpicture}
    \begin{loglogaxis}[
        xlabel={DoF},
        ylabel={$|\tilde\lambda_1- \lambda_1 |$},
        grid=major,
        legend style={font=\tiny},
        xmax=10e6,
        ymax=10e1,
        ]
        
        \addplot[color=blue, mark=triangle] table [x=dof, y=abserror1]
        {data/primal_SlitDb_t=0.5_epsilon.dat};
        \addlegendentry{primal adpative $\eta$}
        \addplot[color=red, mark=triangle*] table [x=dof, y=abserror1]
        {data/primal_SlitDb_t=0.5_reduced.dat};
        \addlegendentry{primal adpative $\bar \eta$}
        \addplot[color=blue, mark=square] table [x=dof, y=abserror1]
        {data/ultraweak_SlitDb_t=0.5_epsilon.dat};
        \addlegendentry{ultraweak adpativ $\eta$}
        \addplot[color=red, mark=square*] table [x=dof, y=abserror1]
        {data/ultraweak_SlitDb_t=0.5_reduced.dat};
        \addlegendentry{ultraweak adpative $\tilde{\eta}$}
        \addplot[color=black, mark=*] table [x=dof, y=abserror1]
        {data/primal_SlitDb_ref.dat};
        \addlegendentry{primal uniform}
        \addplot[color=black, mark=o] table [x=dof, y=abserror1]
        {data/ultraweak_SlitDb_ref.dat};
        \addlegendentry{ultraweak uniform}
        
    \draw (axis cs:100000,1e-5)--(axis cs:10000 ,1e-5)--(axis cs:10000 ,1e-4)--cycle;
        \node at (axis cs:30000,1e-5) [below] {$1$};
        \node at (axis cs:10000,0.00003) [left] {$1$};

    \draw (axis cs:300000,1e-2)--(axis cs:3000000,1e-2)--(axis cs:3000000,0.003)--cycle;
        \node at (axis cs: 1e6,1e-2) [above]{$1$};
        \node at (axis cs: 3000000,0.005)[right]{$0.5$};
    \end{loglogaxis}
\end{tikzpicture}
\caption{Convergence rates for the slit domain}
\end{figure}
\subsection{Higher order term}
In our next numerical simulation, we check in the lowest order case the
statement of Proposition~\ref{pr:HOT} about the higher order term appearing in our
a posteriori estimates.
In order to calculate the higher order term, we computed a reference solution
on a fine mesh with about a million of degrees of freedom.
In Figure~\ref{fig:HO} we can appreciate that the term
$\Vert \lambda u_0 -\lambda_h u_{0,h}\Vert$ is actually of higher order and
converges twice as fast as the eigenfunction in both cases that we have
considered. The rate is actually stabilizing about the value of $1$, even if
on the slit domain the convergence is faster in the pre-asymptotic regime.

\begin{figure}[ht]
\begin{tikzpicture}
    \begin{loglogaxis}[
        xlabel={DoF},
        ylabel={$\Vert \tilde\lambda_1 \tilde u_0- \lambda_1 u_0\Vert_U $},
        grid=major,
        legend style={font=\tiny},
        xmax=10e5,
        ymin=10e-5]
        \addplot[color=blue, mark=*] table [x=dof,y=HO]
        {data/primal_Lshape_t=0.5_recuded_HO.dat};
        \addlegendentry{primal L-shaped}
        \addplot[color= red,mark= +] table [x=dof,y=HO]
        {data/primal_SlitDb_t=0.5_recuded_HO.dat};
        \addlegendentry{primal Slit}

        \draw (axis cs:30000,0.0005) -- (axis cs:300000,0.0005) -- (axis cs:30000,0.005) -- cycle;
        \node at (axis cs:90000,0.0005)[below] {$1$};
        \node at (axis cs:30000,0.0015)[left]  {$1$};
    \end{loglogaxis}
\end{tikzpicture}
\caption{Convergence higher order term}
\label{fig:HO}
\end{figure}
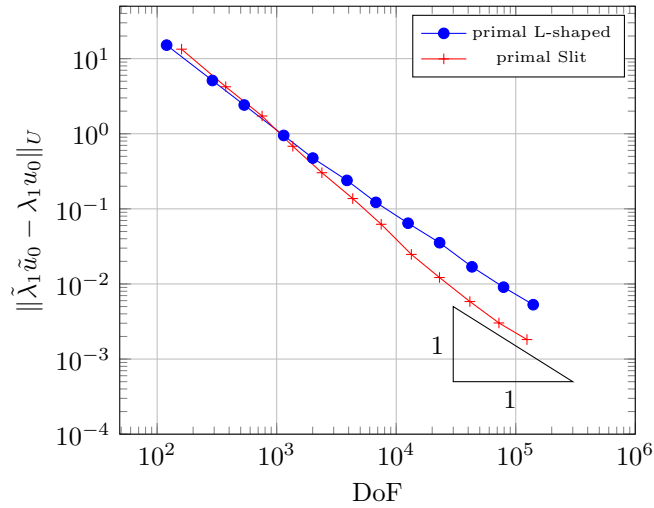
\subsection{Efficiency-ratio}
Our last numerical test concerns the efficiency-ratio, defined as
$ \frac{\eta}{\Vert \tilde u - u\Vert_U}$. We used the same reference solution
that we computed for the higher order term calculation.
In table \ref{tab:eff} we can see that for the L-shaped domain we have an
efficiency-ratio between 9 and 10. For the slit domain we have a ratio
between 14 to 19.

\begin{table}
\sisetup{round-mode=places, round-precision=3}
\csvreader[tabular=|l|l||l|l|,
            table head= \hline DoF & Efficiency-ratio (L-shaped) & DoF & Efficiency-ratio (Slit)\\\hline,
            late after line= \\\hline]
            {data/primal_Lshaperatio.csv}{dof_lshape=\doflshape, ratio_lshape=\ratiolshape, dof_slit=\dofslit ,ratio_slit=\ratioslit}
            { \doflshape & \num{\ratiolshape} & \dofslit & \num{\ratioslit}}
\caption{Efficiency ratio for primal error estimator $\eta$}
\label{tab:eff}
\end{table}

\bibliographystyle{amsplain}
\bibliography{ref}

\end{document}